\newcommand{\E}{\mathbb{E}}
    \newcommand{\Prb}{\mathbb{P}}
	\DeclareMathOperator{\Animals}{Animals}	
		\newcommand{\sS}{\mathbb{S}}
	\DeclareMathOperator{\Diam}{Diam}
    \newcommand{\sZ}{\mathbb{Z}}
    \newcommand{\sC}{\mathcal{C}}
    \newcommand{\ep}{\varepsilon}
    \newcommand{\pp}{\mathfrak{p}}
		\newcommand{\cE}{\mathcal{E}}
			\newcommand{\cG}{\mathcal{G}}
	\newcommand{\cB}{\mathcal{B}}
		\newcommand{\cH}{\mathcal{H}}
	\newcommand{\sR}{\mathbb{R}}
	\newcommand{\sN}{\mathbb{N}}
 \theoremstyle{plain}  
\newtheorem{thm}{Theorem}[section]
\newtheorem{prop}{Proposition}[section]
\theoremstyle{plain}
\newtheorem{defn}{Definition}[section]
\newtheorem{lem}{Lemma}[section]
\newtheorem{cor}{Corollary}[section]
\newlength{\separationtitre}
\theoremstyle{remark}
\newtheorem{rk}{Remark}[section]
\date{}
\author{Barbara Dembin \thanks{LPSM UMR 8001, Université Paris Diderot, Sorbonne Paris Cité, CNRS, F-75013 Paris, France}}
\begin{document}

 \selectlanguage{english}

\title{Regularity of the time constant for a supercritical Bernoulli percolation \thanks{Research was partially supported by the ANR project PPPP (ANR-16-CE40-0016)}}

\maketitle

\textbf{Abstract}: We consider an i.i.d. supercritical bond percolation on $\sZ^d$, every edge is open with a probability $p>p_c(d)$, where $p_c(d)$ denotes the critical parameter for this percolation. We know that there exists almost surely a unique infinite open cluster $\sC_p$ \cite{Grimmett99}. We are interested in the regularity properties of the chemical distance for supercritical Bernoulli percolation. The chemical distance between two points $x,y\in\sC_p$ corresponds to the length of the shortest path in $\sC_p$ joining the two points. The chemical distance between $0$ and $nx$ grows asymptotically like $n\mu_p(x)$. We aim to study the regularity properties of the map $p\rightarrow\mu_p$ in the supercritical regime. 
This may be seen as a special case of first passage percolation where the distribution of the passage time is  $G_p=p\delta_1+(1-p)\delta_\infty$, $p>p_c(d)$. It is already known that the map $p\rightarrow\mu_p$ is continuous (see \cite{GaretMarchandProcacciaTheret}).
\newline

\textit{AMS 2010 subject classifications:} primary 60K35, secondary 82B43.

\textit{Keywords:} Regularity, percolation, time constant, isoperimetric constant.

\section{Introduction}
The model of first passage percolation was first introduced by Hammersley and Welsh \cite{HammersleyWelsh} as a model for the spread of a fluid in a porous medium. Let $d\geq 2$. We consider the graph $(\sZ^d,\E^d)$ having for vertices $\sZ^d$ and for edges $\E^d$ the set of pairs of nearest neighbors in $\sZ^d$ for the Euclidean norm. To each edge $e\in\E^d$ we assign a random variable $t(e)$ with values in $\sR^+$ so that the family $(t(e),\,e\in\E^d)$ is independent and identically distributed according to a given distribution $G$. The random variable $t(e)$ may be interpreted as the time needed for the fluid to cross the edge $e$.  We can define a random pseudo-metric $T$ on this graph: for any pair of vertices $x$, $y\in\sZ^d$, the random variable $T(x,y)$ is the shortest time to go from $x$ to $y$. Let $x\in\sZ^d\setminus\{0\}$. One can ask what is the asymptotic behavior of the quantity $T(0,x)$ when $\|x\|$ goes to infinity. Under some assumptions on the distribution $G$, one can prove that asymptotically when $n$ is large, the random variable $T(0,nx)$ behaves like $n\cdot \mu_G(x)$ where $\mu_G(x)$ is a deterministic constant depending only on the distribution $G$ and the point $x$. The constant $\mu_G(x)$ corresponds to the limit of $T(0,nx)/n$ when $n$ goes to infinity, when this limit exists. This result was proved by Cox and Durrett in \cite{CoxDurrett} in dimension $2$ under some integrability conditions on $G$, they also proved that $\mu_G$ is a semi-norm. Kesten extended this result to any dimension $d\geq 2$ in \cite{Kesten:StFlour}, and he proved that $\mu_G$ is a norm if and only if $G(\{0\})<p_c(d)$. In the study of first passage percolation, $\mu_G$ is usually called the time constant. The constant $\mu_G(x)$ may be seen as the inverse of the speed of spread of the fluid in the direction of $x$.

It is possible to extend this model by doing first passage percolation on a random environment.  We consider an i.i.d. supercritical bond percolation on the graph $(\sZ^d,\E^d )$.  Every edge $e\in\E^d$ is open with a probability $p>p_c(d)$, where $p_c(d)$ denotes the critical parameter for this percolation. We know that there exists almost surely a unique infinite open cluster $\sC_p$ \cite{Grimmett99}. We can define the model of first passage percolation on the infinite cluster $\sC_p$. To do so, we consider a probability measure $G$ on $[0,+\infty]$ such that $G([0,\infty[)=p$. In this setting, the $p$-closed edges correspond to the edges with an infinite value and so the cluster $\sC_p$ made of the edges with finite passage time corresponds to the infinite cluster of a supercritical Bernoulli percolation of parameter $p$. The existence of a time constant for such distributions was first obtained in the context of stationary integrable ergodic field by Garet and Marchand in \cite{GaretMarchand04} and was later shown for an independent field without any integrability condition by Cerf and Théret in \cite{cerf2016}. 

The question of the continuity of the map $G\rightarrow \mu_G$ started in dimension $2$ with the article of Cox \cite{Cox}. He showed the continuity of this map under the hypothesis of uniform integrability: if $G_n$ weakly converges toward $G$ and if there exists an integrable law $F$ such that for all $n\in\sN$, $F$ stochastically dominates $G_n$, then $\mu_{G_n}\rightarrow \mu_{G}$. In \cite{CoxKesten}, Cox and Kesten prove the continuity of this map in dimension $2$ without any integrability condition. Their idea was to consider a geodesic for truncated passage times $\min(t(e),M)$, and along it to avoid clusters of $p$-closed edges, that is to say edges with a passage time larger than some $M>0$, by bypassing them with a short path in the boundary of this cluster. Note that by construction, the edges of the boundary have passage time smaller than $M$. Thanks to combinatorial considerations, they were able to obtain a precise control on the length of these bypasses.  This idea was later extended to all the dimensions $d\geq 2$ by Kesten in \cite{Kesten:StFlour}, by taking a $M$ large enough such that the percolation of the edges with a passage time larger than $M$ is highly subcritical: for such a $M$, the size of the clusters of $p$-closed edges can be controlled. However, this idea does not work anymore when we allow passage time to take infinite values.
In \cite{GaretMarchandProcacciaTheret}, Garet, Marchand, Procaccia and Théret proved the continuity of the map $G\rightarrow\mu_G$ for general laws on $[0,+\infty]$ without any moment condition. More precisely, let $(G_n)_{n\in\sN}$, and $G$ probability measures on $[0,+\infty]$ such that $G_n$ weakly converges toward $G$ (we write $G_n\overset{d}{ \rightarrow} G$), that is to say for all continuous bounded functions $f:[0,+\infty]\rightarrow [0,+\infty)$, we have 
$$\lim_{n\rightarrow +\infty} \int _{[0,+\infty]} fdG_n= \int _{[0,+\infty]} fdG\, .$$
Equivalently, we say that $G_n\overset{d}{ \rightarrow} G$ if and only if $\lim_{n\rightarrow +\infty}G_n([t,+\infty])=G([t,+\infty])$ for all $t\in [0,+\infty]$ such that $x\rightarrow G([x,+\infty])$ is continuous at $t$.
If moreover for all $n\in\sN$, $G_n([0,+\infty))>p_c(d)$ and $G([0,+\infty))>p_c(d)$, then
\begin{align*}
\lim_{n\rightarrow \infty} \sup_{x\in\mathbb{S}^{d-1}} |\mu_{G_n}(x)-\mu_G(x)|=0\, 
\end{align*}
where $\mathbb{S}^{d-1}$ is the unit sphere for the Euclidean norm.

In this paper, we focus on distributions of the form $G_p=p\delta_1+(1-p)\delta_\infty$, $p>p_c(d)$.  We denote by $\sC'_p$ be the subgraph of $\sZ^d$  whose edges are open for the Bernoulli percolation of parameter $p$. The travel time given a law $G_p$ between two points $x$ and $y\in\sZ^d$ coincides with the so-called chemical distance that is the graph distance between $x$ and $y$ in $\sC'_p$. Namely, for $x,y\in\sZ^d$, we define the chemical distance \smash{$D^{\sC'_p}(x,y)$ }as the length of the shortest $p$-open path joining $x$ and $y$. Note that if $x$ and $y$ are not in the same cluster of  $\sC'_p$, \smash{$D^{\sC'_p}(x,y)=+\infty$}. Actually, when $x$ and $y$ are in the same cluster,\smash{ $D^{\sC'_p}(x,y)$} is of order $\|y-x\|_1$. In \cite{AntalPisztora}, Antal and Pisztora obtained the following large deviation upper bound:
\begin{align*}
\limsup\limits_{\|y\|_1\rightarrow\infty}\frac{1}{\|y\|_1}\log\Prb[0\leftrightarrow y,D^{\sC'_p}(0,y)>\rho]<0\, .
\end{align*} 
This result implies that there exists a constant $\rho$ depending on the parameter $p$ and the dimension $d$ such that
\begin{align*}
\limsup\limits_{\|y\|_1\rightarrow\infty}\frac{1}{\|y\|_1}D^{\sC'_p}(0,y)\mathds{1}_{0\leftrightarrow y}\leq \rho, \, \Prb_p\text{ a.s.}
\end{align*} 
These results were proved using renormalization arguments. They were improved later in \cite{GaretMarchand04} by Garet and Marchand, for the more general case of a stationary ergodic field. They proved that $D^{\sC'_p}(0,x)$ grows linearly in $\|x\|_1$. More precisely, for each $y\in\sZ^d\setminus\{0\}$, they proved the existence of a constant $\mu_p(y)$ such that
\begin{align*}
\lim\limits_{\substack{n\rightarrow\infty \\ 0\leftrightarrow ny}}\frac{D^{\sC'_p}(0,ny)}{n}=\mu_p(y) , \, \Prb_p\text{ a.s.}\, .
\end{align*}
The constant $\mu_p$ is called the time constant.
The map $p\rightarrow\mu_p$ can be extended to $\mathbb{Q}^d$ by homogeneity and to $\mathbb{R}^d$ by continuity. It is a norm on $\mathbb{R}^d$.
This convergence holds uniformly in all directions, this is equivalent of saying  that an asymptotic shape emerges. Indeed, the set of points that are at a chemical distance from $0$ smaller than $n$ asymptotically looks like $n\mathcal{B}_{\mu_p}$, where $\mathcal{B}_{\mu_p}$ denotes the unit ball associated with the norm $\mu_p$. In another paper \cite{GaretMarchand07}, Garet and Marchand studied the fluctuations of $D^{\sC'_p}(0,y)/\mu_p(y)$ around its mean and obtained the following large deviation result:
\begin{align*}
\forall \ep>0, \; \limsup_{\|x\|_1\rightarrow \infty} \frac{\ln\Prb_p\left(0\leftrightarrow x, \frac{D^{\sC'_p}(0,y)}{\mu_p(y)}\notin (1-\ep,1+\ep)\right)}{\|x\|_1}<0\, .
\end{align*}
In the same paper, they showed another large deviation result that, as a corollary, proves the continuity of the map $p\rightarrow \mu_p$ in $p=1$.
In \cite{GaretMarchand10}, Garet and Marchand obtained moderate deviations of the quantity $|D^{\sC'_p}(0,y)-\mu_p(y)|$. 
As a corollary of the work of  Garet, Marchand, Procaccia and Théret in \cite{GaretMarchandProcacciaTheret} we obtain the continuity of the map $p\rightarrow \mu_p$ in $(p_c(d),1]$. Our paper is a continuation of \cite{GaretMarchandProcacciaTheret}, our aim is to obtain better regularity properties for the map $p\rightarrow \mu_p$ than just continuity. We prove the following theorem.
\begin{thm}[Regularity of the time constant] \label{heart}
Let $p_0>p_c(d)$. There exists a constant $\kappa_d$ depending only on $d$ and $p_0$, such that for all $p\leq q$ in $[p_0,1]$
$$\sup_{x\in\mathbb{S}^{d-1}}|\mu_{p}(x)- \mu_{q}(x)|\leq \kappa_d (q-p)|\log(q-p)| \,.$$
\end{thm}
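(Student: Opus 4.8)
The plan is to couple all the percolations together, get one inequality from monotonicity, and obtain the other by turning a $q$-geodesic into a $p$-open path, locally rerouting around the edges that are $q$-open but $p$-closed.

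Realise every configuration with a single i.i.d.\ family $(U_e)_{e\in\E^d}$ of uniforms on $[0,1]$, calling $e$ \emph{$r$-open} when $U_e\le r$. Then the $p$-open subgraph sits inside the $q$-open subgraph, so $D^{\sC'_q}(x,y)\le D^{\sC'_p}(x,y)$ whenever the latter is finite, hence $\mu_q\le\mu_p$ everywhere; it remains to bound $\mu_p(x)-\mu_q(x)$ from above, uniformly in $x\in\mathbb{S}^{d-1}$. Put $\delta=q-p$ (we may assume $\delta$ small, the statement being trivial otherwise since $p\ge p_0$ forces $\delta\le 1-p_0$ and $\delta|\log\delta|$ is then bounded below) and call an edge \emph{grey} when $U_e\in(p,q]$. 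Working on the event $0,nx\in\sC_p$ (the general case reducing to it by the usual anchoring of $0$ and $nx$ to nearby points of the infinite clusters), I will build a $p$-open path from $0$ to $nx$ of length at most $n\mu_q(x)+\kappa_d\,\delta|\log\delta|\,n$ with probability tending to $1$; dividing by $n$, sending $n\to\infty$, and invoking the uniform-in-direction shape theorems for $\mu_p$ and $\mu_q$ then gives the claim.

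Two structural facts drive the construction. First, the grey edges form an i.i.d.\ bond percolation of the small parameter $\delta$, so the grey cluster of an edge has diameter with a tail decaying like $(C_d\delta)^k$; moreover the $q$-open configuration already determines a geodesic $\gamma$ for $D^{\sC'_q}(0,nx)$, and conditionally on it the marks $(U_e)$ on $q$-open edges are i.i.d.\ uniform on $[0,q]$, so the number of grey edges of $\gamma$ is dominated by a $\mathrm{Binomial}(|\gamma|,\delta/q)$, and since $|\gamma|\le D^{\sC'_{p_0}}(0,nx)\le C_{p_0}n$ with high probability (Antal--Pisztora), $\gamma$ carries at most $C_{p_0}\delta n$ grey edges except with exponentially small probability. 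Second, $\gamma$ being $q$-open, any edge of $\gamma$ inside a $p$-closed region is grey, so each maximal $p$-closed sub-segment of $\gamma$ lies in one grey cluster; in particular $\gamma$ only ever dips into a $p$-closed region to grey-cluster depth, and the infinite cluster $\sC_p$ is always close by. Now reroute: split $\gamma$ into its maximal $p$-open pieces and its (short) maximal grey pieces; for a grey piece from $a$ to $b$, back up along $\gamma$ to the last vertex $a^\star$ before $a$ in $\sC_p$ and forward to the first $b^\star$ after $b$ in $\sC_p$, and replace the stretch between them by a shortest $p$-open path from $a^\star$ to $b^\star$, which exists. The vertices of $\gamma$ strictly between consecutive visits to $\sC_p$ lie in finite $p$-clusters strung together by grey edges, so $\|a^\star-b^\star\|_1$ is $O(1)$ in probability, and the replacement length is governed by the Antal--Pisztora / Garet--Marchand large deviations for the chemical distance, uniformly for $p\ge p_0$. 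The delicate point is to control this \emph{simultaneously} at all $O(\delta n)$ rerouting sites: one certifies each reroute inside a box of side $L_0=\alpha|\log\delta|$ (Pisztora good boxes for $p$, a nearby point of $\sC_p$, bounded local chemical distance), which fails at a given site with probability at most a large power $\delta^{c\alpha}$ of $\delta$. Taking $\alpha$ large makes this summable against the $O(\delta n)$ sites, so outside a sparse, lower-order set of bad sites every reroute costs at most $\kappa_d L_0=\kappa_d\alpha|\log\delta|$, while the rare bad sites are absorbed via the unconditional exponential tails and contribute a lower-order term; summing over the at most $C_{p_0}\delta n$ grey pieces produces the desired path.

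\textbf{Main obstacle.} The hard part is the uniform control of the reroutes. Contrary to the Cox--Kesten/Kesten setting one cannot truncate to make the obstacles subcritical: for $p$ near $p_c$ in dimension $d\ge 3$ the $p$-closed percolation is itself supercritical, so $p$-closed clusters can be infinite and one cannot simply encircle them. The resolution is that a $q$-geodesic penetrates $p$-closed regions only through grey edges, hence only to grey-cluster depth, so $\sC_p$ is always within $O(1)$; anchoring every reroute to $\sC_p$ and performing it at the logarithmic scale $L_0=\alpha|\log\delta|$ — large enough that the certifying event fails with a summably small power of $\delta$, small enough that the total overhead stays $O(\delta|\log\delta|\,n)$ — is exactly what produces the $(q-p)|\log(q-p)|$ rate. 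A secondary nuisance is measurability: $\gamma$ depends on the whole configuration, so one conditions on the $q$-open configuration to treat the grey edges and the local $p$-structure as fresh randomness.
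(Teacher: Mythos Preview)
Your overall architecture matches the paper's: renormalize at scale $N\sim|\log(q-p)|$, show that a $q$-geodesic $\gamma$ carries $O((q-p)|\gamma|)$ grey edges via independence from the coupling, and bypass each inside a good $N$-box at cost $O(N)$, for a total detour of order $(q-p)|\log(q-p)|\,n$. Your conditioning on the $q$-configuration to make the grey indicators on $\gamma$ i.i.d.\ is exactly the paper's two-source $(V,Z)$ coupling (Lemma~\ref{geo}) in another guise.

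The genuine gap is where you call the measurability issue a ``secondary nuisance''; in the paper it is the main technical hurdle. Your certifying event for a reroute --- the $L_0$-box around a grey edge is $p$-good --- depends on the $p$-configuration, and conditionally on the $q$-configuration the $p$-law is \emph{not} i.i.d.\ Bernoulli$(p)$: every $q$-closed edge is deterministically $p$-closed, so the conditional $p$-percolation lives on the random $q$-open subgraph with parameter $p/q$, and the standard bound $\Prb(\text{box is $p$-bad})\le Ae^{-BL_0}=\delta^{c\alpha}$ is not available in that conditional measure. You therefore cannot assert that each of the $O(\delta n)$ reroute boxes is bad with probability $\delta^{c\alpha}$ regardless of where $\gamma$ puts them. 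The paper does not condition on $\gamma$ for this part at all: it proves instead (Proposition~\ref{controlBad}) a bound, \emph{uniform over all macroscopic paths $\widetilde\Gamma$ of length $\le n$ starting at the origin}, of the form
\[
\Prb\Big(\sum_{C\in Bad:\,C\cap\Gamma\neq\emptyset}|C|\ge \varepsilon n\Big)\le C_\varepsilon^{\,n},
\]
via a Cox--Kesten corridor covering and a combinatorial balance between the number of corridors $(c_dK)^{2(d-1)n/K}$ and the bad-cluster tail. This decorrelation step is what makes the argument close, and your sketch has no analogue of it.

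A smaller point: the claim that $\gamma$ only dips away from $\sC_p$ to ``grey-cluster depth'' is not correct as stated. A $p$-open stretch of $\gamma$ between two grey edges can lie in a \emph{finite} $p$-cluster of large (exponentially-tailed but unbounded) diameter, so an excursion between consecutive visits to $\sC_p$ is an alternation of grey edges with finite $p$-clusters and is not controlled by the diameter of a grey cluster alone. Your later $L_0$-box approach would absorb this, but the earlier heuristic is misleading and should not be relied on.
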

\noindent To study the regularity of the map $p\rightarrow \mu_p$, our aim is to control the difference between the chemical distance in the infinite cluster $\sC_p$ of a Bernoulli percolation of parameter $p>p_c(d)$ with the chemical distance in $\sC_q$ where $q\geq p$. The key part of the proof lies in the modification of a path. We couple the two percolations such that a $p$-open edge is also $q$-open but the converse does not necessarily hold. We consider a $q$-open path for some $q\geq p>p_c(d)$. Some of the edges of this path are $p$-closed, we want to build upon this path a $p$-open path by bypassing the $p$-closed edges. In order to bypass them, we use the idea of \cite{GaretMarchandProcacciaTheret} and we build our bypasses at a macroscopic scale. This idea finds its inspiration in the works of Antal and Pisztora \cite{Pisztora} and Cox and Kesten \cite{CoxKesten}. We have to consider an appropriate renormalization and we obtain a macroscopic lattice with good and bad sites. Good and bad sites correspond to boxes of size $2N$ in the microscopic lattice. We will do our bypasses using good sites at a macroscopic scale that will have good connectivity properties at a microscopic scale. The remainder of the proof consists in getting probabilistic estimates of the length of the bypass. In this article we improve the estimates obtained in \cite{GaretMarchandProcacciaTheret}. We quantify the renormalization to be able to give quantitative bounds on continuity. Namely, we give an explicit expression of the appropriate size of a $N$-box. We use the idea of corridor that appeared in the work of Cox and Kesten \cite{CoxKesten} to have a better control on combinatorial terms and derive a more precise control of the length of the bypasses than the one obtained in \cite{GaretMarchandProcacciaTheret}.

We recall that $\cB_{\mu_p}$ denotes the unit ball associated with the norm $\mu_p$. From  Theorem  \ref{heart}, we can easily deduce the following regularity of the asymptotic shapes.
\begin{cor}[Regularity of the asymptotic shapes]\label{cor1}
Let $p_0>p_c(d)$. There exists a constant $\kappa'_d$ depending only on $d$ and $p_0$, such that for all $p\leq q$ in $[p_0,1]$,
\begin{align*}
d_{\cH}(\cB_{\mu_q},\cB_{\mu_p})\leq  \kappa'_d (q-p)|\log(q-p) |
\end{align*}
where $d_\cH$ is the Hausdorff distance between non-empty compact sets of $\sR^d$.
\end{cor}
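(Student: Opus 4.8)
\textbf{Proof proposal for Corollary~\ref{cor1}.} The plan is to deduce the corollary from Theorem~\ref{heart} by a purely deterministic argument comparing two norms that are uniformly close on the unit sphere, through their radial functions. The only fact about the maps $\mu_p$ needed beyond Theorem~\ref{heart} is a uniform lower bound away from $0$ on $\mathbb{S}^{d-1}$: since any $p$-open path from $0$ to $ny$ uses at least $\|ny\|_1$ edges, $D^{\sC'_p}(0,ny)\geq n\|y\|_1$ whenever $0\leftrightarrow ny$, hence $\mu_p(y)\geq\|y\|_1\geq\|y\|_2$ for all $y\in\sR^d$; in particular $\mu_p(u)\geq 1$ for every $u\in\mathbb{S}^{d-1}$ and every $p\in[p_0,1]$. (The sets $\cB_{\mu_p}$ are automatically non-empty and compact, $\mu_p$ being a norm on the finite-dimensional space $\sR^d$, so $d_\cH$ makes sense.)

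The main step is the following deterministic estimate. For a norm $N$ on $\sR^d$ and $u\in\mathbb{S}^{d-1}$, the unique point of the half-line $\sR^+u$ lying on the boundary of $\cB_N$ is $u/N(u)$, i.e.\ the radial function of $\cB_N$ in direction $u$ is $1/N(u)$. Writing $\ep:=\sup_{u\in\mathbb{S}^{d-1}}|\mu_p(u)-\mu_q(u)|$ and using $\mu_p,\mu_q\geq 1$ on $\mathbb{S}^{d-1}$, we get
\begin{align*}
\sup_{u\in\mathbb{S}^{d-1}}\left|\frac{1}{\mu_p(u)}-\frac{1}{\mu_q(u)}\right|=\sup_{u\in\mathbb{S}^{d-1}}\frac{|\mu_p(u)-\mu_q(u)|}{\mu_p(u)\,\mu_q(u)}\leq\ep\,.
\end{align*}
To pass from this radial bound to a Hausdorff bound, take $x\in\cB_{\mu_p}$, write $x=tu$ with $u\in\mathbb{S}^{d-1}$ and $0\leq t\leq 1/\mu_p(u)$, and set $x':=\min\!\big(t,1/\mu_q(u)\big)u\in\cB_{\mu_q}$; then $\|x-x'\|_2=\big(t-1/\mu_q(u)\big)^+\leq\big(1/\mu_p(u)-1/\mu_q(u)\big)^+\leq\ep$. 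Thus every point of $\cB_{\mu_p}$ lies within Euclidean distance $\ep$ of $\cB_{\mu_q}$, and by symmetry $d_\cH(\cB_{\mu_p},\cB_{\mu_q})\leq\ep$. Theorem~\ref{heart} bounds $\ep$ by $\kappa_d(q-p)|\log(q-p)|$, which gives the corollary, in fact with $\kappa'_d=\kappa_d$.

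No genuine obstacle arises; the one thing to watch is that the factor $1/(\mu_p(u)\mu_q(u))$ produced when comparing the radial functions stays bounded uniformly in $p,q\in[p_0,1]$ and $u\in\mathbb{S}^{d-1}$ — which is exactly what the crude bound $\mu_p\geq 1$ on $\mathbb{S}^{d-1}$ furnishes. (Even with only $\mu_p\geq c(p_0)>0$ the same computation would go through and produce a constant $\kappa'_d$ depending on $d$ and $p_0$, matching the statement.)
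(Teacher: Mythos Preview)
Your proof is correct and follows essentially the same route as the paper's: bound the Hausdorff distance by the supremum over $u\in\sS^{d-1}$ of $|1/\mu_p(u)-1/\mu_q(u)|$, control this by $\sup_u|\mu_p(u)-\mu_q(u)|$ divided by a uniform lower bound on $\mu_p\mu_q$, and invoke Theorem~\ref{heart}. The one difference is in how the uniform lower bound on $\mu_p$ over $\sS^{d-1}\times[p_0,1]$ is obtained: the paper appeals to the continuity of $p\mapsto\mu_p$ from \cite{GaretMarchandProcacciaTheret} together with compactness of $[p_0,1]$ to get $\mu^{min}:=\inf_{p\in[p_0,1]}\inf_{u\in\sS^{d-1}}\mu_p(u)>0$, whereas you use the elementary observation (specific to the Bernoulli setting $G_p=p\delta_1+(1-p)\delta_\infty$) that any open path has length at least $\|\cdot\|_1$, hence $\mu_p\geq\|\cdot\|_1\geq\|\cdot\|_2$ and $\mu_p\geq1$ on $\sS^{d-1}$. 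Your argument is therefore slightly more self-contained here and even yields $\kappa'_d=\kappa_d$; the paper's version, on the other hand, would transfer verbatim to more general passage-time distributions where no such trivial lower bound is available.
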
 

Here is the structure of the paper. In section \ref{defs}, we introduce some definitions and preliminary results that are going to be useful in what follows. The section \ref{renormalization} presents the renormalization process and how we modify a $q$-open path to turn it into a $p$-open path and how we can control the length of the bypasses. In section \ref{goodbox} and \ref{ProbaEst}, we get probabilistic estimates on the length of the bypasses. Finally, in section \ref{estimate} we prove the main Theorem \ref{heart} and its Corollary \ref{cor1}.

\begin{rk}
The section \ref{renormalization} is a simplified version of the renormalization process that was already present in \cite{GaretMarchandProcacciaTheret}. The simplification comes from the fact that we are not interested in general distributions but only on distributions $G_p$ for $p>p_c(d)$ which have the advantage of taking only two values $1$ or $+\infty$. The original part of this work is the quantification of the renormalization and the combinatorial estimates of section \ref{ProbaEst}. 
\end{rk}

\section{Definitions and preliminary results}\label{defs}
Let $d\geq 2$. Let us recall the different distances in $\mathbb{R}^d$. Let $x=(x_1,\dots,x_d)\in\mathbb{R}^d$, we define $$\|x\|_1=\sum_{i=1}^d |x_i|,\quad\|x\|_2=\sqrt{\sum_{i=1}^d x_i^2}\quad\text{and}\quad\|x\|_\infty=\max\{ |x_i|,i=1,\dots,d\}\,.$$ 

Let $\cG$ be a subgraph of $(\sZ^d,\E^d)$ and $x,y\in\cG$. A path $\gamma$ from $x$ to $y$ in $\cG$ is a sequence $\gamma=(v_0,e_1,\dots,e_n,v_n)$ such that $v_0=x$, $v_n=y$ and for all $i\in\{1,\dots,n\}$, the edge $e_i=\langle v_{i-1},v_i\rangle$ belongs to $\cG$. We say that $x$ and $y$ are connected in $\cG$ if there exists such a path. We denote by $|\gamma|=n$ the length of $\gamma$. We define $$D^{\cG}(x,y)=\inf\{|r|: \text{$r$ is a path from $x$ to $y$ in $\cG$}\}$$ the chemical distance between $x$ and $y$ in $\cG$. If $x$ and $y$ are not connected in $\cG$, $D^{\cG}(x,y)=\infty$. In the following, $\cG$ will be $\sC'_p$ the subgraph of $\sZ^d$ whose edges are open for the Bernoulli percolation of parameter $p>p_c(d)$.
To get around the fact that the chemical distance can take infinite values we introduce regularized chemical distance. Let $\sC\subset \sC'_p$ be a connected cluster, we define $\widetilde{x}^\sC$  as the vertex of $\sC$ which minimizes $\|x-\widetilde{x}^\sC\|_1$ with a deterministic rule to break ties. As $\sC\subset \sC'_p$, we have $$D^{\sC'_p}(\widetilde{x}^\sC,\widetilde{y}^\sC)\leq D^{\sC}(\widetilde{x}^\sC,\widetilde{y}^\sC)<\infty\,.$$ Typically, $\sC$ is going to be the infinite cluster for Bernoulli percolation with a parameter $p_0\leq p$ (thus $\sC_{p_0}\subset \sC'_p$).

We can define the regularized time constant as in \cite{GaretMarchand10} or as a special case of \cite{cerf2016}.
\begin{prop}\label{convergence}Let $p>p_c(d)$. There exists a deterministic function $\mu_p:\sZ^d\rightarrow [0,+\infty)$, such that for every $p_0\in(p_c(d),p]$:
\begin{align*}
\forall x\in\sZ^d\, \lim_{n\rightarrow \infty} \frac{D^{\sC_{p}}(\widetilde{0}^{\sC_{p_0}},\widetilde{nx}^{\sC_{p_0}})}{n}=\mu_p(x)\text{ a.s. and in $L^1$.}
\end{align*}
\end{prop}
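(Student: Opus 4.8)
The plan is to apply Kingman's subadditive ergodic theorem. Fix $x\in\sZ^d\setminus\{0\}$ (the case $x=0$ being trivial, $\mu_p(0)=0$) and $p_0\in(p_c(d),p]$, and work on a single probability space carrying the standard monotone coupling of the Bernoulli percolations at levels $p_0$ and $p$, so that $\sC_{p_0}\subseteq\sC_p$. For $0\le m\le n$ set
$$X_{m,n}:=D^{\sC_p}\big(\widetilde{mx}^{\sC_{p_0}},\widetilde{nx}^{\sC_{p_0}}\big),$$
which is finite since both endpoints lie in $\sC_p$. Subadditivity $X_{0,n}\le X_{0,m}+X_{m,n}$ is immediate from the triangle inequality for the chemical distance in $\sC_p$. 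For stationarity, let $\tau_x$ denote translation of the configuration by $x$ and fix the tie-breaking rule defining $\widetilde{\cdot}^{\sC_{p_0}}$ to be translation covariant; then $\sC_{p_0}$ and $\sC_p$ are translation covariant, $\widetilde{(m+1)x}^{\sC_{p_0}}\circ\tau_x=x+\widetilde{mx}^{\sC_{p_0}}$, and consequently $X_{m+1,n+1}\circ\tau_x=X_{m,n}$ for all $m\le n$. Since $\tau_x$ preserves the product measure, the family $(X_{m,n})$ is stationary in the sense required by Kingman's theorem, and since $\tau_x$ is mixing, hence ergodic, on the i.i.d.\ product, the resulting limit will be deterministic.

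The remaining hypothesis to check is $\E[X_{0,1}]<\infty$. As $D^{\sC_p}\le D^{\sC_{p_0}}$, it suffices to bound $\E\big[D^{\sC_{p_0}}(\widetilde{0}^{\sC_{p_0}},\widetilde{x}^{\sC_{p_0}})\big]$. In the supercritical regime the $\|\cdot\|_1$-distance from $0$ (resp.\ from $x$) to $\sC_{p_0}$ has an exponentially decaying tail \cite{Grimmett99}; and conditionally on the positions $u=\widetilde{0}^{\sC_{p_0}}$ and $v=\widetilde{x}^{\sC_{p_0}}$, the Antal--Pisztora estimate recalled in the introduction provides an exponential tail for $D^{\sC_{p_0}}(u,v)$ in terms of $\|u-v\|_1$. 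Summing these controls over the possible positions of $u$ and $v$ gives $\E[X_{0,1}]<\infty$. Since furthermore $X_{m,n}\ge0$, Kingman's theorem applies and yields a constant $\mu_p(x)$ with $X_{0,n}/n\to\mu_p(x)$ almost surely and in $L^1$; moreover $0\le\mu_p(x)=\inf_{n\ge1}\E[X_{0,n}]/n\le\E[X_{0,1}]<\infty$, so $\mu_p$ indeed takes values in $[0,+\infty)$.

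It remains to see that $\mu_p(x)$ does not depend on the choice of $p_0\in(p_c(d),p]$. Given $p_0\le p_0'$ in this interval, the coupling gives $\sC_{p_0}\subseteq\sC_{p_0'}\subseteq\sC_p$, and by the triangle inequality
$$\Big|D^{\sC_p}\big(\widetilde{0}^{\sC_{p_0}},\widetilde{nx}^{\sC_{p_0}}\big)-D^{\sC_p}\big(\widetilde{0}^{\sC_{p_0'}},\widetilde{nx}^{\sC_{p_0'}}\big)\Big|\le D^{\sC_p}\big(\widetilde{0}^{\sC_{p_0}},\widetilde{0}^{\sC_{p_0'}}\big)+D^{\sC_p}\big(\widetilde{nx}^{\sC_{p_0}},\widetilde{nx}^{\sC_{p_0'}}\big).$$
Each regularization point lies within $\|\cdot\|_1$-distance $O(\log n)$ of its target, by the exponential tail of the arm to the infinite cluster together with Borel--Cantelli; the Antal--Pisztora bound then forces $D^{\sC_p}(\widetilde{nx}^{\sC_{p_0}},\widetilde{nx}^{\sC_{p_0'}})=O(\mathrm{polylog}\,n)=o(n)$ almost surely, and likewise at the origin. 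Hence $D^{\sC_p}(\widetilde{0}^{\sC_{p_0}},\widetilde{nx}^{\sC_{p_0}})/n$ and $D^{\sC_p}(\widetilde{0}^{\sC_{p_0'}},\widetilde{nx}^{\sC_{p_0'}})/n$ have the same limit, and for arbitrary $p_0,p_0''$ one compares both to $p_0\wedge p_0''$. The main obstacle in this scheme is the pair of quantitative estimates just described — the first-moment bound on $X_{0,1}$ and the $o(n)$ control of the regularization discrepancies — each of which reduces to combining the Antal--Pisztora large-deviation estimate with the exponential decay of finite cluster radii in supercritical percolation; once these are granted, the subadditive and ergodic machinery is routine.
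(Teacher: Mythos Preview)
The paper does not give its own proof of this proposition: it is quoted as a known result, with the existence of $\mu_p$ attributed to \cite{GaretMarchand10} (or the more general \cite{cerf2016}) and the independence from the regularizing parameter $p_0$ to Lemma~2.11 of \cite{GaretMarchandProcacciaTheret}. Your sketch via Kingman's subadditive ergodic theorem, with integrability coming from Antal--Pisztora plus the exponential tail of the distance to $\sC_{p_0}$, is precisely the approach used in those references, so there is no substantive difference to compare.

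Two small points worth tightening in your write-up. First, the paper only fixes ``a deterministic rule to break ties'' without requiring translation covariance; your argument genuinely needs the covariance $\widetilde{y+x}^{\sC_{p_0}(\tau_x\omega)}=x+\widetilde{y}^{\sC_{p_0}(\omega)}$, so you should state explicitly that you choose such a rule (e.g.\ lexicographic order of $z-y$). Second, in the $p_0$-independence step, the Antal--Pisztora bound as quoted controls $\Prb(D^{\sC'_p}(u,v)\ge\beta\|u-v\|_1)$ by $\widehat A e^{-\widehat B\|u-v\|_1}$, which is not directly useful when $\|u-v\|_1$ is small; to get $D^{\sC_p}(\widetilde{nx}^{\sC_{p_0}},\widetilde{nx}^{\sC_{p_0'}})=o(n)$ almost surely you should either invoke the stronger tail $\Prb(D^{\sC'_p}(u,v)\ge t,\,u\leftrightarrow v)\le Ce^{-ct}$ uniformly in $u,v$ (which also follows from \cite{AntalPisztora}), or union-bound over all pairs in the box $nx+[-C\log n,C\log n]^d$ against a threshold growing faster than $\log n$. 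Either fix is routine, and with it your argument is complete.
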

\noindent It is important to check that $\mu_p$ does not depend on $p_0$, \textit{i.e.}, on the cluster  $\sC_{p_0}$ we use to regularize. This is done in Lemma 2.11 in \cite{GaretMarchandProcacciaTheret}. As a corollary, we obtain the monotonicity of the map $p\rightarrow \mu_p$ which is non increasing, see Lemma 2.12 in \cite{GaretMarchandProcacciaTheret}.
\begin{cor}\label{decmu}For all $p_c(d)<p\leq q$ and for all $x\in\sZ^d$,
$$\mu_p(x)\geq\mu_q(x)\,.$$
\end{cor}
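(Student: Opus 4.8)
The plan is to combine the standard monotone coupling of Bernoulli percolations with the $p_0$-independence of $\mu_p$ recorded just above. First, I would fix once and for all a parameter $p_0\in(p_c(d),p]$. Since $p\le q$, we also have $p_0\in(p_c(d),q]$, so by Proposition \ref{convergence} this same $p_0$ is an admissible regularizing parameter for both $\mu_p$ and $\mu_q$. This is the one point that makes the whole argument work: choosing the auxiliary parameter below $p$ lets us use a single regularizing cluster $\sC_{p_0}$ for both time constants.

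Next I would realize the three percolations of parameters $p_0\le p\le q$ on one probability space: assign to each edge $e\in\E^d$ an independent uniform variable $U_e$ on $[0,1]$ and call $e$ $r$-open when $U_e\le r$. Then every $p_0$-open edge is $p$-open and every $p$-open edge is $q$-open, so almost surely $\sC_{p_0}\subset\sC_p\subset\sC_q$ (an infinite connected set of $r$-open edges is in particular an infinite connected set of $r'$-open edges for $r\le r'$, hence contained in the unique infinite $r'$-cluster). For this fixed $p_0$, the regularized vertices $\widetilde{0}^{\sC_{p_0}}$ and $\widetilde{nx}^{\sC_{p_0}}$ depend only on $\sC_{p_0}$ and on the target point, hence are the same whether we work at level $p$ or at level $q$; moreover any path in $\sC_p$ is a path in $\sC_q$. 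Therefore, almost surely, for every $n\ge1$ and every $x\in\sZ^d$,
\begin{align*}
D^{\sC_q}\bigl(\widetilde{0}^{\sC_{p_0}},\widetilde{nx}^{\sC_{p_0}}\bigr)\le D^{\sC_p}\bigl(\widetilde{0}^{\sC_{p_0}},\widetilde{nx}^{\sC_{p_0}}\bigr)<\infty\,.
\end{align*}

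Finally, dividing by $n$ and letting $n\to\infty$, Proposition \ref{convergence} (applied with this common $p_0$ on both sides) shows that the left-hand side converges almost surely to $\mu_q(x)$ and the right-hand side to $\mu_p(x)$, whence $\mu_q(x)\le\mu_p(x)$. I do not expect any genuine obstacle here: the proof is essentially a one-line coupling argument, and the only things requiring a little care are that $p_0$ be picked below $p$ so that the regularization vertices match, and that the monotone coupling is what upgrades the trivial edgewise inclusion of open edges to the inclusion $\sC_{p_0}\subset\sC_p\subset\sC_q$ of infinite clusters.
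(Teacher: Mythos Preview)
Your argument is correct and is precisely the approach the paper has in mind: it states the corollary as a direct consequence of the $p_0$-independence of $\mu_p$ (deferring to Lemma~2.12 of \cite{GaretMarchandProcacciaTheret}), and your proof spells this out---choose a common $p_0\le p$, use the monotone coupling to get $\sC_{p_0}\subset\sC_p\subset\sC_q$ and hence $D^{\sC_q}\le D^{\sC_p}$ on the shared regularized endpoints, then pass to the limit via Proposition~\ref{convergence}.
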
 

We will also need this other definition of path that corresponds to the context of site percolation. Let $\cG$ be a subset of $\sZ^d$ and $x,y\in\cG$. We say that the sequence $\gamma=(v_0,\dots,v_n)$ is a $\sZ ^d$-path from $x$ to $y$ in $\cG$ if $v_0=x$, $v_n=y$ and for all $i\in\{1,\dots,n\}$, $v_i\in\cG$ and $\|v_i-v_{i-1}\|_1=1$. 

\section{Modification of a path}\label{renormalization}

In this section we present the renormalization process. We are here at a macroscopic scale, we define good boxes to be boxes with useful properties to build our modified paths.

\subsection {Definition of the renormalization process} Let $p>p_c(d)$ be the parameter of an i.i.d. Bernoulli percolation on the edges of $\sZ^d$. For a large integer $N$, that will be chosen later, we set $B_N=[-N,N[^d\cap \sZ^d$ and define the following family of $N$-boxes, for $\textbf{i}\in \sZ^d$,
$$B_N(\textbf{i})=\tau_{\textbf{i}(2N+1)}(B_N)$$
where $\tau_b$ denotes the shift in $\sZ^d$ with vector $b\in\sZ^d$. $\sZ^d$ is the disjoint union of this family: $\sZ^d=\sqcup_{\textbf{i}\in\sZ^d}B_N(\textbf{i})$. We need to introduce larger boxes that will help us to link $N$-boxes together. For $\textbf{i}\in \sZ^d$, we define
$$B'_N(\textbf{i})=\tau_{\textbf{i}(2N+1)}(B_{3N}).$$
To define what a good box is, we have to list properties that a good box should have to ensure that we can build a modification of the path as we have announced in the introduction. We have to keep in mind that all the properties must occur with probability close to $1$ when $N$ goes to infinity. Before defining what a good box is, let us recall some definitions.
A connected cluster $C$ is crossing for a box $B$, if for all $d$ directions, there is an open path in $C\cap B$ connecting the two opposite faces of $B$.
We define the diameter of a finite cluster $\sC$ as 
$$\Diam(\sC):=\max_{\substack{i=1,\dots, d\\ x,y\in \sC}}|x_i-y_i|\, .$$
\begin{defn}\label{defbon} We say that the macroscopic site $\textbf{i}$ is $p$-good if the following events occur:
\begin{enumerate}[(i)]
\item There exists a unique $p$-cluster $\sC$ in $B'_N(\textbf{i})$ with diameter larger than $N$;
\item This $p$-cluster $\sC$ is crossing for each of the $3^d$ $N$-boxes included in $B'_N(\textbf{i})$;
\item For all $x,y\in B'_N(\textbf{i})$, if $x$ and $y$ belong to $\sC$ then $D^{\sC'_p}(x,y)\leq 12\beta N$, for an appropriate $\beta$ that will be defined later.
\end{enumerate}
$\sC$ is called the crossing $p$-cluster of the $p$-good box $B_N(\textbf{i})$.
\end{defn}

Let us define a percolation by site on the macroscopic grid given by the state of the boxes, \textit{i.e.}, we say that a macroscopic site $\textbf{i}$ is open if the box $B_N(\textbf{i})$ is $p$-good, otherwise we say the site is closed. Note that the state of the boxes are not independent, there is a short range dependence. 

On the macroscopic grid $\sZ^d$, we consider the standard definition of closest neighbors, that is to say $x$ and $y$ are neighbors if $\|x-y\|_1=1$. Let $C$ be a connected set of macroscopic sites, we define its exterior vertex boundary 
$$\partial_v C =\left\{\begin{array}{c} \textbf{i}\in\sZ^d\setminus C: \text {\textbf{i} has a neighbour in C and is connected } \\ \text{to infinity by a $\sZ^d$-path in $\sZ^d\setminus C$} \end{array} \right\}.$$
For a bad macroscopic site $\textbf{i}$, let us denote by $C(\textbf{i})$ the connected cluster of bad macroscopic sites containing $\textbf{i}$. If $C(\textbf{i})$ is finite, the set $\partial_v C (\textbf{i})$ is not connected in the standard definition but it is with a weaker definition of neighbors. We say that two macroscopic sites $\textbf{i}$ and $\textbf{j}$ are $*$-neighbors if and only if $\|\textbf{i}-\textbf{j}\|_\infty=1$. Therefore, $\partial_v C (\textbf{i})$  is an $*$-connected set of good macroscopic sites see for instance Lemma 2 in \cite{Timar}. 
We adopt the convention that $\partial_v C (\textbf{i})=\{\textbf{i}\}$ when $\textbf{i}$ is a good site.

\subsection{Construction of bypasses}
Let us consider $p_c(d)<p\leq q$, we fix $N$ in this section. Let us consider a $q$-open path $\gamma$. In this paper, we will consider two different couplings. We do not specify here what coupling we use. However, for these two couplings a $p$-open edge is necessarily $q$-open. Thus, some edges in $\gamma$ might be $p$-closed. We denote by $\gamma_o$ the set of $p$-open edges in $\gamma$, and by $\gamma_c$ the set of $p$-closed edges in $\gamma$. Our aim is to build a bypass for each edge in $\gamma_c$ using only  $p$-open edges. The proof will follow the proof of Lemma 3.2 in \cite{GaretMarchandProcacciaTheret} up to some adaptations.

As the bypasses are going to be made at a macroscopic scale, we need to consider the $N$-boxes that $\gamma$ crosses. We denote by $\Gamma\subset \sZ^d$  the connected set of all the $N$-boxes visited by $\gamma$. The set $\Gamma$ is connected in the standard definition. We denote by $Bad$ the random set of bad connected components on the macroscopic percolation given by the states of the $N$-boxes. The following Lemma states that we can bypass all the $p$-closed edges in $\gamma$ and gives a control on the total size of these bypasses.

\begin{lem}\label{lem1}
Let us consider $y,z\in\sC_p$  such that the $N$-boxes of $y$ and $z$ belong to an infinite cluster of $p$-good boxes. Let us consider a $q$-open path $\gamma$ joining $y$ to $z$. Then there exists a $p$-open path $\gamma'$ between $y$ and $z$ that has the following properties:
\begin{enumerate}[(1)]
\item $\gamma'\setminus \gamma$ is a set of disjoint self avoiding $p$-open paths that intersect $\gamma'\cap\gamma$ at their endpoints;
\item $|\gamma'\setminus \gamma|\leq \rho_dN\left(\sum\limits_{C\in Bad:C\cap\Gamma\neq\emptyset} |C| + |\gamma_c|\right)$, where $\rho_d$ is a constant depending only on the dimension $d$.
\end{enumerate}
\end{lem}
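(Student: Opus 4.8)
The plan is to build the $p$-open path $\gamma'$ by surgery on $\gamma$: wherever $\gamma$ uses a $p$-closed edge, or more generally wherever $\gamma$ passes through a region where $p$-good connectivity cannot be guaranteed, we detour through the exterior boundary of the offending bad macroscopic cluster. First I would set up the macroscopic picture: the set $\Gamma$ of $N$-boxes visited by $\gamma$ is connected, and among these boxes some belong to bad components of the macroscopic site percolation. For each bad component $C$ with $C\cap\Gamma\neq\emptyset$ and finite, its exterior vertex boundary $\partial_v C$ is a $*$-connected set of $p$-good macroscopic sites (by the cited Lemma 2 in \cite{Timar}); since the $N$-boxes of $y$ and $z$ lie in an infinite cluster of good boxes, no relevant bad component is infinite, so this applies. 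The key structural input is that in each $p$-good box the crossing $p$-cluster $\sC$ is crossing for all $3^d$ sub-$N$-boxes of $B'_N$, and these crossing clusters of $*$-neighbouring good boxes overlap inside the enlarged boxes $B'_N$ and hence are all part of one $p$-open connected structure; moreover property (iii) of Definition \ref{defbon} gives that within any single $B'_N(\textbf{i})$ the chemical distance between two points of $\sC$ is at most $12\beta N$.

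The construction itself would go component by component. Enumerate the maximal sub-paths $\gamma^{(1)},\dots,\gamma^{(m)}$ of $\gamma$ that enter a fixed bad component's boxes (after suitably merging bad components whose boundaries interact); each such excursion has an entry point and an exit point lying in good boxes adjacent to the bad component. I would replace each excursion $\gamma^{(j)}$ by a $p$-open path that: starts from the point where $\gamma$ last sits in a good box before entering, travels within that good box's crossing cluster to a box of $\partial_v C$, then follows the crossing clusters around $\partial_v C$ — using that consecutive boxes of the $*$-connected set $\partial_v C$ are $*$-neighbours, so their $B'_N$'s overlap and their crossing $p$-clusters are joined by $p$-open paths of length $O(N)$ inside the overlap — and finally re-enters the good box on the far side and reaches the point where $\gamma$ resumes. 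The total length of one such bypass is $O(N)$ times the number of boxes in $\partial_v C$, and by the isoperimetric-type bound $|\partial_v C|\leq c_d|C|$ for $*$-connected boundaries of connected sets in $\sZ^d$, this is $O(N|C|)$. Summing over all bad components $C$ with $C\cap\Gamma\neq\emptyset$ gives the $\rho_d N\sum_C|C|$ term. The extra $\rho_d N|\gamma_c|$ term absorbs the $p$-closed edges of $\gamma$ that do not lie in any bad box excursion — i.e. $p$-closed edges sitting inside good boxes (or straddling their faces): each such edge is bypassed locally inside the relevant $B'_N$ using property (iii), at cost at most $12\beta N\leq \rho_d N$, and there are at most $|\gamma_c|$ of them. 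Concatenating the untouched good pieces of $\gamma$ with all these bypasses yields a $p$-open walk from $y$ to $z$; loop-erasing and cleaning up overlaps makes the new pieces $\gamma'\setminus\gamma$ into disjoint self-avoiding $p$-open paths meeting $\gamma'\cap\gamma$ only at their endpoints, which is property (1), while the length accounting above is exactly property (2).

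The main obstacle I expect is the bookkeeping of how bypasses for distinct bad components, and for isolated $p$-closed edges, interact: excursions may be interleaved, a single macroscopic box of $\partial_v C$ may be visited by several excursions, and a bypass routed around one component may pass near another. Handling this cleanly requires (a) a careful definition of which excursions get grouped together — essentially, contracting each bad component and rerouting — so that one does not double-count, and (b) verifying that after all rerouting the added paths can be made simultaneously disjoint and self-avoiding, e.g. by processing excursions in order along $\gamma$ and loop-erasing at the end; the constant $\rho_d$ then has to swallow the $3^d$, the per-box $O(N)$ connection costs, the $12\beta$ from property (iii), and the isoperimetric constant $c_d$. The geometric facts that make it work — overlap of crossing clusters in $*$-neighbouring good $B'_N$'s, and the bound $|\partial_v C|\le c_d|C|$ — are standard, so the real content is organizing the surgery, which follows the scheme of Lemma 3.2 in \cite{GaretMarchandProcacciaTheret} with the simplifications afforded by $G_p$ taking only the values $1$ and $+\infty$.
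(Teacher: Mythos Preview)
Your overall strategy matches the paper's, but there is one genuine gap: you never verify that the entry and exit points of your bypasses lie in the crossing $p$-cluster of the relevant good box, and without this you cannot invoke property~(iii) of Definition~\ref{defbon} (nor Lemma~\ref{lem2}). For the ``local bypass'' of a $p$-closed edge $e$ sitting in a good box, the endpoints of $e$ need not belong to the crossing cluster at all --- if several consecutive edges of $\gamma$ are $p$-closed, the interior vertices lie in no $p$-open cluster. Likewise, when $\gamma$ exits a bad-component excursion into a good box, the exit vertex is only known to be on $\gamma$, not in $\sC$. Property~(iii) is stated for points of $\sC$, so your bypasses are not yet defined.

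The paper resolves this by treating good boxes that contain a $p$-closed edge on the same footing as $\partial_v C$ for bad $C$ (recall the convention $\partial_v C(\textbf{i})=\{\textbf{i}\}$ when $\textbf{i}$ is good). One lists \emph{all} boxes $\varphi_1(1),\dots,\varphi_1(r_1)$ containing a $p$-closed edge of $\gamma$, forms $E=\bigcup_j \partial_v C(\varphi_1(j))$, takes its $*$-connected components, and then performs two further extractions (removing nested components, then removing macroscopic loops of $\gamma$) so that the resulting good sets $S_{\varphi_4(1)},\dots,S_{\varphi_4(r_4)}$ are pairwise non-$*$-adjacent and visited by $\gamma$ in this order. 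The payoff is that the $p$-open segment of $\gamma$ joining the last exit from $S_{\varphi_4(j)}$ to the first entry into $S_{\varphi_4(j+1)}$ has diameter at least $N$; property~(i) of a good box then forces its endpoints to lie in the crossing cluster. The first entry and last exit are handled via $y,z\in\sC_p$. Only then can Lemma~\ref{lem2} be applied inside each $S_{\varphi_4(j)}$ to build the link $\gamma_{link}(j)$, and the accounting $\sum_j|S_{\varphi_4(j)}|\le |\gamma_c|+\sum_C|\partial_v C|\le |\gamma_c|+2d\sum_C|C|$ gives~(2). Your separate handling of bad excursions and isolated $p$-closed edges does not deliver the diameter-$\ge N$ guarantee between consecutive bypasses, so the crossing-cluster membership remains unproven; unifying the two cases into a single family of $*$-connected good sets, with the nesting and loop extractions, is precisely what makes the surgery go through.
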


\begin{rk}
Note that here we don't need to introduce a parameter $p_0$ and require that the bypasses are $p_0$ open as in \cite{GaretMarchandProcacciaTheret}. Indeed, this condition was required because finite passage times of edges were not bounded. This is the reason why it was needed in \cite{GaretMarchandProcacciaTheret} to bypass $p$-closed edges with $p_0$-open edges. These $p_0$-open edges were precisely edges with passage time smaller than some constant $M_0$.  In our context, we can get rid of this technical aspect because passage times when finite may only take the value $1$.
\end{rk}

\noindent Before proving Lemma \ref{lem1}, we need to prove the following lemma that gives a control on the length of a path between two points in a $*$-connected set of good boxes.

\begin{lem} \label{lem2}Let $\mathcal{I}$ be a set of $n\in\mathbb{N}^*$ macroscopic sites such that $(B_N(\textbf{i}))_{\textbf{i}\in\mathcal{I}}$ is a $*$-connected set of $p$-good $N$-boxes. Let $x\in B_N(\textbf{j})$ be in the $p$-crossing cluster of $B_N(\textbf{j})$ with $\textbf{j}\in\mathcal{I}$ and $y\in B_N(\textbf{k})$ be in the $p$-crossing cluster of $B_N(\textbf{k})$ with $\textbf{k}\in\mathcal{I}$. Then, we can find a $p$-open path joining $x$ and $y$ of length at most $12\beta Nn$ (with the same constant $\beta$ as in Definition \ref{defbon}).

\end{lem}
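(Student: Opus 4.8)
The plan is to walk along a $*$-path in the macroscopic lattice from $\textbf{j}$ to $\textbf{k}$ and concatenate short pieces of $p$-open path, one per macroscopic site visited, controlling the length of each piece by property (iii) of Definition \ref{defbon}. First I would extract from the $*$-connectedness of $(B_N(\textbf{i}))_{\textbf{i}\in\mathcal{I}}$ a $*$-path $\textbf{i}_0=\textbf{j},\textbf{i}_1,\dots,\textbf{i}_m=\textbf{k}$ with all $\textbf{i}_\ell\in\mathcal{I}$ and $m\le n-1$ (a spanning structure of an $n$-vertex connected graph has a path between any two vertices of length $\le n-1$). The key geometric observation is that for two $*$-neighbouring macroscopic sites $\textbf{i}_\ell,\textbf{i}_{\ell+1}$, the larger boxes $B'_N(\textbf{i}_\ell)$ and $B'_N(\textbf{i}_{\ell+1})$ overlap substantially: since $\|\textbf{i}_\ell-\textbf{i}_{\ell+1}\|_\infty=1$ and $B'_N$ has half-width $3N$ while the shift between consecutive macroscopic sites is $2N+1$, the intersection $B'_N(\textbf{i}_\ell)\cap B'_N(\textbf{i}_{\ell+1})$ contains a full $N$-box (in fact contains $B_N(\textbf{i}_\ell)$ and $B_N(\textbf{i}_{\ell+1})$, and more). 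By property (i), each $B'_N(\textbf{i}_\ell)$ has a unique $p$-cluster $\sC_{\ell}$ of diameter $>N$, and by property (ii) this cluster crosses all $3^d$ of the $N$-boxes inside $B'_N(\textbf{i}_\ell)$; in particular $\sC_\ell$ crosses $B_N(\textbf{i}_{\ell+1})$ and $\sC_{\ell+1}$ crosses $B_N(\textbf{i}_\ell)$.

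The next step is to show the crossing clusters of consecutive boxes coincide inside the overlap, so that one can pass from one to the next. Consider the $N$-box $B_N(\textbf{i}_{\ell+1})$, which lies in both $B'_N(\textbf{i}_\ell)$ and $B'_N(\textbf{i}_{\ell+1})$. Both $\sC_\ell$ and $\sC_{\ell+1}$ contain a crossing of this common $N$-box, hence each has diameter $\ge N$ within $B'_N(\textbf{i}_\ell)$; but a crossing cluster of a box and any cluster of diameter $\ge N$ meeting the same box must intersect one another inside $B'_N(\textbf{i}_\ell)$ by a standard planarity-free crossing argument — or more simply, two crossings of the same $N$-box in directions forced to meet, together with uniqueness from property (i) of $B'_N(\textbf{i}_\ell)$, force $\sC_{\ell+1}\cap B'_N(\textbf{i}_\ell)\subset\sC_\ell$ and likewise $\sC_\ell\cap B'_N(\textbf{i}_{\ell+1})\subset\sC_{\ell+1}$. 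Thus the clusters $\sC_\ell$ agree along the chain, and I may pick a vertex $x_\ell\in B_N(\textbf{i}_\ell)$ lying in $\sC_\ell\cap\sC_{\ell+1}$ for each $\ell$ (and set $x_0$ near $x$, $x_m$ near $y$, using that $x,y$ are in the respective crossing clusters which coincide with $\sC_0,\sC_m$).

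Finally I would assemble the path: for each $\ell$, both $x_\ell$ and $x_{\ell+1}$ lie in $B'_N(\textbf{i}_{\ell+1})$ and in its crossing $p$-cluster $\sC_{\ell+1}$, so property (iii) applied to the good box $B_N(\textbf{i}_{\ell+1})$ gives a $p$-open path between them of length at most $12\beta N$; similarly property (iii) for $B_N(\textbf{j})$ connects $x$ to $x_0$ and for $B_N(\textbf{k})$ connects $x_m$ to $y$, each within $12\beta N$. Concatenating at most $n$ such pieces (the $m\le n-1$ steps plus the two endpoint pieces, folded into the count since the endpoint boxes are among the $\textbf{i}_\ell$) yields a $p$-open walk from $x$ to $y$ of length at most $12\beta N n$; erasing loops only shortens it, so we get a $p$-open path with the claimed bound. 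The main obstacle is the middle step — rigorously justifying that the crossing clusters of $*$-adjacent $B'_N$-boxes coincide on their overlap and hence that the whole chain of crossing clusters is a single $p$-connected set — since that is where the geometry of the overlaps and the uniqueness clause (i) must be combined carefully; the length bookkeeping at the end is routine.
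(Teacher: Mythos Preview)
Your proposal is correct and follows essentially the same argument as the paper: extract a self-avoiding $*$-path in $\mathcal{I}$ from $\textbf{j}$ to $\textbf{k}$, pick points in the crossing clusters along it, and concatenate short $p$-open segments between consecutive points via property~(iii). The paper streamlines the bookkeeping by taking the chain's endpoints to be $x$ and $y$ themselves (so there are only $r-1\le n-1$ segments and no separate endpoint pieces to ``fold in''), and it leaves implicit the step that the crossing clusters of $*$-adjacent good boxes coincide on the overlap---the point you correctly identify as the main thing to check and justify.
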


\begin{proof}[Proof of Lemma \ref{lem2}]
Since $\mathcal{I}$ is a $*$-connected set of macroscopic sites, there exists a self-avoiding macroscopic $*$-connected path $(\varphi_i)_{1\leq i\leq r}\subset \mathcal{I}$  such that $\varphi_1=\textbf{j}$, $\varphi_r=\textbf{k}$. Thus, we get that $r\leq|\mathcal{I}| = n$. As all the sites in $\mathcal{I}$ are good, all the $N$-boxes corresponding to the sites $(\varphi_i)_{1\leq i\leq r}$ are good.

For each $2\leq i\leq r-1$, we define $x_i$ to be a point in the $p$-crossing cluster of the box $B_N(\varphi_i)$ chosen according to a deterministic rule. We define $x_1=x$ and $x_r=y$.
For each $1\leq i <r$, $x_i$ and $x_{i+1}$ both belong to $B'_N(\varphi_i)$. Using property $(iii)$ of a $p$-good box, we can build a $p$-open path $\gamma(i)$ from $x_i$ to $x_{i+1}$ of length at most $12\beta N$. By concatenating the paths $\gamma(1),\dots,\gamma(r-1)$ in this order, we obtain a $p$-open path joining $x$ to $y$ of length at most $ 12 \beta Nn$.

\end{proof}

\begin{proof}[Proof of Lemma \ref{lem1}] Let us consider $y,z\in\sC_p$  such that the $N$-boxes of $y$ and $z$ belong to an infinite cluster of $p$-good boxes. Let $\gamma$ be a $q$-open path joining $y$ to $z$. The idea is the following. We want to bypass all the $p$-closed edges of $\gamma$. Let us consider an edge $e\in\gamma_c$ and $B_N(\textbf{i})$ its associated $N$-box. There are two different cases:
\begin{itemize}
\item If $B_N(\textbf{i})$ is a good box, we can build a $p$-open bypass of $e$ at a microscopic scale by staying in a fixed neighborhood of $B_N(\textbf{i})$. We will use the third property of good boxes to control the length of the bypass that will be at most $12\beta N$.
\item If $B_N(\textbf{i})$ is a bad box, we must build a $p$-open bypass at a macroscopic scale in the exterior vertex boundary $\partial_v C (\textbf{i})$ that is an $*$-connected component of good boxes. We will use Lemma \ref{lem2} to control the length of this bypass.
\end{itemize}
\paragraph{}

Let $\varphi_0=(\varphi_0(j))_{1\leq j \leq r_0}$ be the sequence of $N$-boxes $\gamma$ visits. From the sequence $\varphi_0$, we can extract the sequence of $N$-boxes containing at least one $p$-closed edge of $\gamma$. We only keep the indices of the boxes containing the smallest extremity of a $p$-closed edge of $\gamma$ for the lexicographic order. We obtain a sequence  $\varphi_1=(\varphi_1(j))_{1\leq j\leq r_1}$. Notice that $r_1\leq r_0$ and $r_1\leq |\gamma_c|$.
Before building our bypasses, we have to get rid of some pathological cases. We are going to proceed to further extractions. Note that two $*$-connected components of $(\partial_v C (\varphi_1(j)))_{1\leq j\leq r_1}$ can be $*$-connected together, in that case they count as a unique connected component. Namely, the set $E=\cup_{1\leq j\leq r_1}\partial_v C (\varphi_1(j))$ has at most $r_1$ $*$-connected component $(S_{\varphi_2(j)})_{1\leq j\leq r_2}$. Up to reordering, we can assume that the sequence  $(S_{\varphi_2(j)})_{1\leq j\leq r_2}$ is  ordered in such a way that $S_{\varphi_2(1)}$ is the first $*$-connected component of $E$ visited by $\gamma$ among the $(S_{\varphi_2(j)})_{1\leq j\leq r_2}$, $S_{\varphi_2(2)}$ is the second and so on. 
\begin{figure}[h]
\def\svgwidth{1.4\textwidth}
 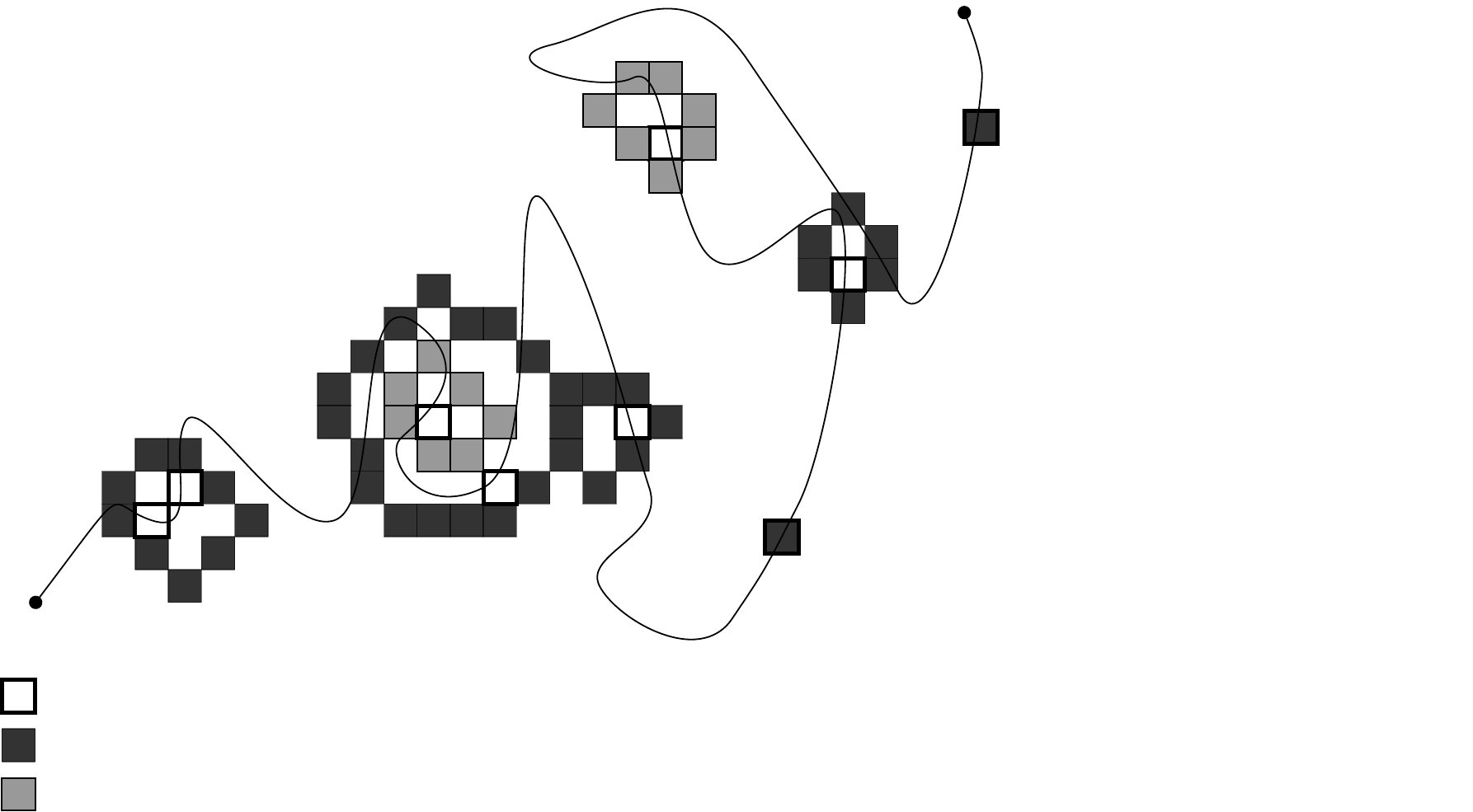
 \caption{Construction of the path $\gamma'$ - First step}
 \label{fig1}
\end{figure}
\noindent Next, we consider the case of nesting, that is to say when there exist $j\neq k$ such that $S_{\varphi_2(j)}$ is in the interior of $S_{\varphi_2(k)}$. In that case, we only keep the largest connected component $S_{\varphi_2(k)}$: we obtain another subsequence $(S_{\varphi_3(j)})_{1\leq j\leq r_3}$ with $r_3\leq r_2$. Finally, we want to exclude a last case, when between the moment we enter for the first time in a given connected component and the last time we leave this connected component, we have explored other connected components of $(S_{\varphi_3(j)})_{1\leq j\leq r_3}$. That is to say we want to remove the macroscopic loops $\gamma$ makes between different visits of the same $*$-connected components $S_{\varphi_3(j)}$ (see Figure \ref{fig1}).  We iteratively extract from $(S_{\varphi_3(j)})_{1\leq j\leq r_3}$ a sequence $(S_{\varphi_4(j)})_{1\leq j\leq r_4}$ in the following way: $S_{\varphi_4(1)}=S_{\varphi_3(1)}$, assume  $(S_{\varphi_4(j)})_{1\leq j\leq k}$ is constructed $\varphi_4(k+1)$ is the smallest indice $\varphi_3(j)$ such that $\gamma$ visits $S_{\varphi_3(j)}$ after its last visit to $S_{\varphi_4(k)}$. We stop the process when we cannot find such $j$. Of course, $r_4\leq r_3$. The sequence $(S_{\varphi_4(j)})_{1\leq j\leq r_4}$ is a sequence of sets of good $N$-boxes that are all visited by $\gamma$.

Let us introduce some notations (see Figure \ref{fig2}), we write $\gamma=(x_0,\dots,x_n)$. For all $k\in\{1,\dots,r_4\}$, we denote by $\Psi_{in}(k)$ (respectively $\Psi_{out}(k)$) the first moment that $\gamma$ enters in  $S_{\varphi_4(1)}$ (resp. last moment that $\gamma$ exits from  $S_{\varphi_4(1)}$). More precisely, we have $$\Psi_{in}(1)=\min\big\{\,j\geq 1, x_j\in S_{\varphi_4(1)}\,\big\}$$ and $$\Psi_{out}(1)=\max\big\{\,j\geq \Psi_{in}(1) , x_j\in S_{\varphi_4(1)}\,\big\}\,.$$ Assume $\Psi_{in}(1),\dots,\Psi_{in}(k)$ and $\Psi_{out}(1),\dots,\Psi_{out}(k)$ are constructed then $$\Psi_{in}(k+1)=\min\big\{\,j\geq \Psi_{out}(k), x_j\in S_{\varphi_4(k+1)}\,\big\}$$ and $$\Psi_{out}(k+1)=\max\big\{\,j\geq \Psi_{in}(k+1) , x_j\in S_{\varphi_4(k+1)}\,\big\}\,.$$ Let $B_{in}(j)$ be the $N$-box in $S_{\varphi_4(j)}$ containing $x_{\Psi_{in}(j)}$, $B_{out}(j)$ be the $N$-box in $S_{\varphi_4(j)}$ containing $x_{\Psi_{out}(j)}$. Let $\gamma(j)$ be the section of $\gamma$ from $x_{\Psi_{out}(j)}$ to $x_{\Psi_{in}(j+1)}$ for $1\leq j <r_4$, let $\gamma(0)$ (resp $\gamma(r_4)$) be the section of $\gamma$ from $y$ to $x_{\Psi_{in}(1)}$  (resp. from $x_{\Psi_{out}(r_4)}$ to $z$). 

We have to study separately the beginning and the end of the path $\gamma$. Note that as the $N$-boxes of $y$ and $z$ both belong to an infinite cluster of good boxes, their box cannot be nested in a bigger $*$-connected components of good boxes of the collection $(S_{\varphi_4(j)})_{1\leq j\leq r_4}$. Thus, if $B_N(\textbf{k})$, the $N$-box of $y$, contains a $p$-closed edge of $\gamma$, necessarily $S_{\varphi_4(1)}$ contains $B_N(\textbf{k})$, $B_{in}(1)=B_N(\textbf{k})$ and $x_{\Psi_{in}(1)}=y$. Similarly,  if $B_N(\textbf{l})$, the $N$-box of $z$, contains a $p$-closed edge of $\gamma$, necessarily $S_{\varphi_4(r_4)}$ contains $B_N(\textbf{l})$, $B_{out}(r_4)=B_N(\textbf{l})$ and  $x_{\Psi_{out}(r_4)}=z$. 

\begin{figure}[h]
\def\svgwidth{0.9\textwidth}
\begin{center}
 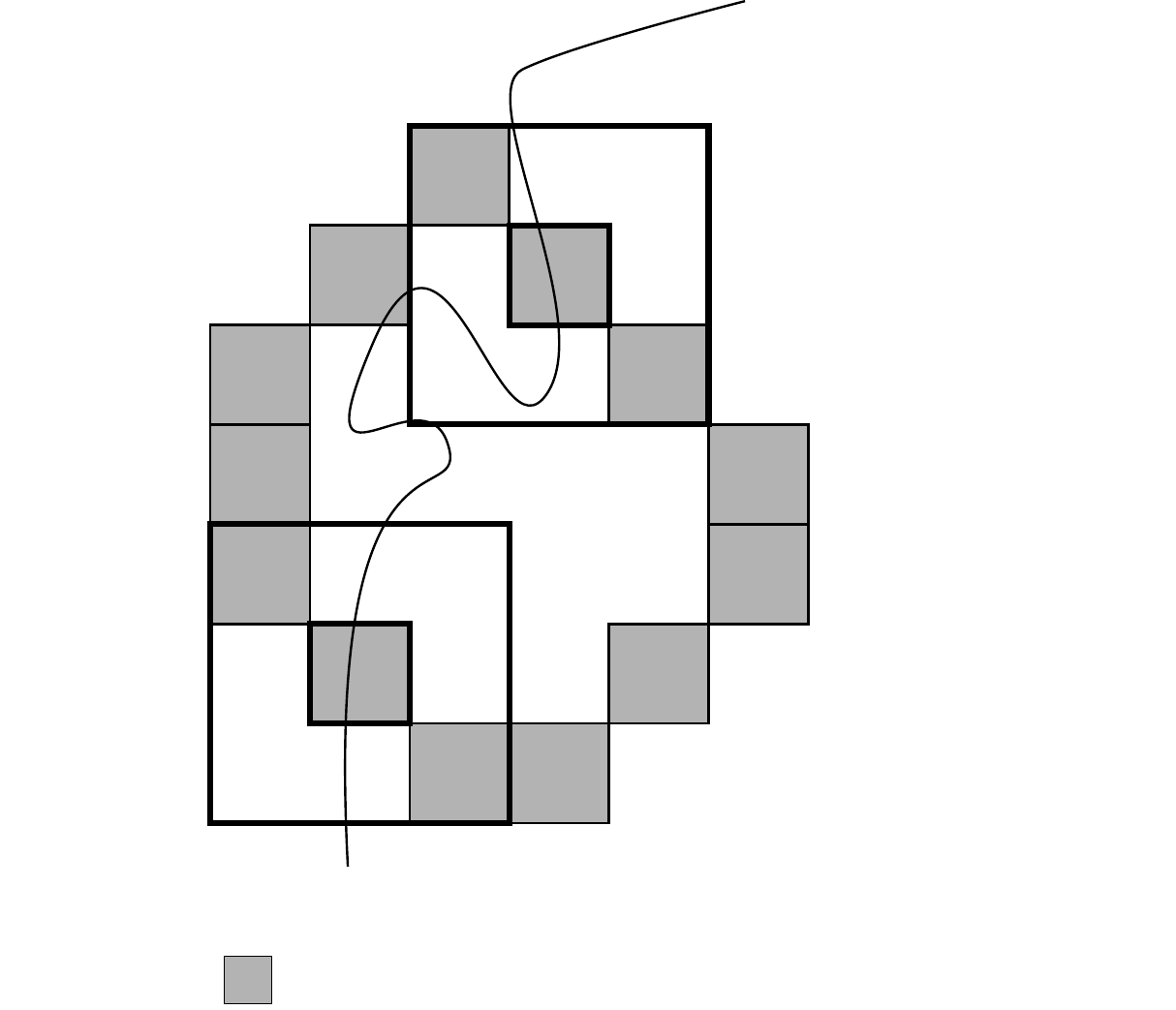
 \end{center}
 \caption{Construction of the path $\gamma'$ - Second step}
 \label{fig2}
\end{figure}
In order to apply Lemma \ref{lem2}, let us show that for every $j\in\{1,\dots,r_4\}$,  $x_{\Psi_{in}(j)}$ (resp. $ x_{\Psi_{out}(j)}$) belongs to the $p$-crossing cluster of $B_{in}(j)$ (resp. $B_{out}(j)$). Let us study separately the case of $x_{\Psi_{in}(1)}$ and  $x_{\Psi_{out}(r_4)}$. If $x_{\Psi_{in}(1)}=y$ then $x_{\Psi_{in}(1)}$ belongs to the $p$-crossing cluster of $B_{in}(j)$. Suppose that $x_{\Psi_{in}(1)}\neq y$. As $y\in \sC_p$ and $y$ is connected to $x_{\Psi_{in}(1)}$ by a $p$-open path, $x_{\Psi_{in}(1)}$ is also in $\sC_p$. By the property $(i)$ of a good box applied to $B_{in}(1)$, we get that $x_{\Psi_{in}(1)}$ is in the $p$-crossing cluster of $B_{in}(1)$. We study the case of $x_{\Psi_{out}(r_4)}$ similarly. 
To study $x_{\Psi_{in}(j)}$ (resp. $x_{\Psi_{out}(j)})$) for $j\in\{2,\dots,r_4-1\}$, we use the fact that by construction, thanks to the extraction $\varphi_2$, two different elements of $(S_{\varphi_4(j)})_{1\leq j\leq r_4}$ are not $*$-connected. Therefore, for $1\leq j<r_4$, we have$$\|x_{\Psi_{in}(j+1)}-x_{\Psi_{out}(j)}\|_1\geq N $$ and so the section $\gamma(j)$ of $\gamma$ from $x_{\Psi_{out}(j)}$ to $x_{\Psi_{in}(j+1)}$ has a diameter larger than $N$ and contains only $p$-open edges. As $B_{out}(j)$ and $B_{in}(j+1)$ are good boxes, we obtain, using again property $(i)$ of good boxes, that $x_{\Psi_{out}(j)}$ and $x_{\Psi_{in}(j+1)}$ belong to the $p$-crossing cluster of their respective boxes. 

Finally, by Lemma \ref{lem2}, for every $j\in\{1,\dots,r_4\}$, there exists a $p$-open path $\gamma_{link}(j)$ joining $x_{\Psi_{in}(j)}$ and $x_{\Psi_{out}(j)}$ of length at most $12\beta N |S_{\varphi_4(j)}|$. We obtain a $p$-open path $\gamma'$ joining $y$ and $z$ by concatenating $\gamma(0),\gamma_{link}(1),\gamma(1),\dots,\gamma_{link}(r_4),$ $\gamma(r_4)$ in this order. Up to removing potential loops, we can suppose that each $\gamma_{link}(j)$ is a self-avoiding path, that all the $\gamma_{link}(j)$ are disjoint and that each $\gamma_{link}(j)$ intersects only $\gamma(j-1)$ and $\gamma(j)$ at their endpoints. Let us estimate the quantity $|\gamma'\setminus\gamma|$, as $\gamma'\setminus\gamma \subset \cup_{i=1}^{r_4} \gamma_{link}(i)$, we obtain:
 \begin{align*}
 |\gamma' \setminus\gamma| &\leq \sum_{j=1}^{r_4} |\gamma_{link}(j)|\\
 & \leq \sum_{j=1}^{r_4} 12\beta N |S_{\varphi_4(j)}| \\
 &\leq 12 \beta N | \gamma_c|+ 12\beta N \sum\limits_{C\in Bad:C\cap\Gamma\neq\emptyset} |\partial_v C| \, 
 \end{align*}
where the last inequality comes from the fact that each $S_{\varphi_4(j)}$ is the union of elements of $ \{\partial_v C:C\in Bad;C\cap\Gamma\neq\emptyset\}$ and of good boxes that contain edges of $\gamma_c$.
We conclude by noticing that $|\partial_v C | \leq 2d |C|$.

\end{proof}
\subsection{Deterministic estimate} When $q-p$ is small, we want to control the probability that the total length of the bypasses $\gamma'\setminus \gamma$ of $p$-closed edges is large. We can notice in Lemma \ref{lem1} that we need to control the bad connected components of the macroscopic site percolation. This will be done in section \ref{ProbaEst}. We will also need a deterministic control on $|\Gamma|$ which is the purpose of the following Lemma (this Lemma is an adaptation of Lemma 3.4 of \cite{GaretMarchandProcacciaTheret}).

\begin{lem}\label{probest} For every path $\gamma$ of $\sZ^d$, for every $N\in\mathbb{N}^*$, there exists a $*$-connected macroscopic path $\widetilde{\Gamma}$ such that 
$$\gamma\subset \bigcup_{\textbf{i}\in\widetilde{\Gamma}}B'_N(\textbf{i}) \text { and }|\widetilde{\Gamma}|\leq 1+\frac{|\gamma|+1}{N}\, .$$
\end{lem}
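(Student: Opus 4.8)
The plan is to build $\widetilde{\Gamma}$ by sampling $\gamma$ at macroscopic spacing and keeping the $N$-boxes of the sampled vertices. Write $\gamma=(x_0,x_1,\dots,x_n)$ with $n=|\gamma|$, and for $x\in\sZ^d$ let $b(x)$ be the unique index $\textbf{i}$ with $x\in B_N(\textbf{i})$. Two elementary facts, both immediate from the definitions of $B_N(\textbf{i})$ and $B'_N(\textbf{i})$, carry the whole argument: (i) if $\|x-y\|_\infty\leq 2N$ then $b(x)$ and $b(y)$ are equal or $*$-neighbors, since $N$-boxes have side $2N$; and (ii) if $x\in B_N(\textbf{i})$ and $\|x-y\|_\infty\leq 2N$ then $y\in B'_N(\textbf{i})$, since $B'_N(\textbf{i})$ is $B_N(\textbf{i})$ enlarged by $2N$ in every direction.

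I would first dispose of short paths. If $n\leq 2N$, then $\|x_j-x_0\|_\infty\leq\|x_j-x_0\|_1\leq j\leq 2N$ for every $j$, so (ii) yields $\gamma\subset B'_N(b(x_0))$; then $\widetilde{\Gamma}=(b(x_0))$ works, and $1\leq 1+(n+1)/N$ is clear.

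For $n>2N$, set $t_k=\min(2Nk,n)$ and let $K$ be the least integer with $2NK\geq n$, so that $0=t_0<t_1<\dots<t_K=n$, and let $\widetilde{\Gamma}=(b(x_{t_0}),\dots,b(x_{t_K}))$ with consecutive repetitions removed (and, if one wants a self-avoiding path, loop-erased, which only decreases $|\widetilde{\Gamma}|$). Three verifications then remain. \emph{$*$-connectedness}: consecutive samples satisfy $\|x_{t_k}-x_{t_{k+1}}\|_\infty\leq t_{k+1}-t_k\leq 2N$, so by (i) their boxes are equal or $*$-adjacent. \emph{Covering}: each vertex $x_j$ lies in some block $[t_k,t_{k+1}]$, whence $\|x_j-x_{t_k}\|_\infty\leq j-t_k\leq 2N$ and (ii) gives $x_j\in B'_N(b(x_{t_k}))$; thus $\gamma\subset\bigcup_{\textbf{i}\in\widetilde{\Gamma}}B'_N(\textbf{i})$. \emph{Cardinality}: $|\widetilde{\Gamma}|\leq K+1$, and minimality of $K$ gives $2N(K-1)<n$, hence $|\widetilde{\Gamma}|\leq 2+(n-1)/(2N)\leq 1+(n+1)/N$, the last inequality being equivalent to $2N\leq n+3$, which holds since $n>2N$.

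I do not expect a real obstacle here; the only point needing care is the constant in the cardinality bound, which is why the spacing is taken to be $2N$ rather than $N$ and short paths are handled separately — a spacing of $N$ would give only $|\widetilde{\Gamma}|\lesssim\lceil n/N\rceil+1$, overshooting the target $1+(n+1)/N$ by a constant. One should also keep in mind that facts (i) and (ii) are used exactly in the form dictated by the renormalization conventions of Section \ref{renormalization}, namely that $N$-boxes have side $2N$ and that $B'_N(\textbf{i})$ is obtained by thickening $B_N(\textbf{i})$ by $2N$ in each direction.
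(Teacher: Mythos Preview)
Your proof is correct and follows essentially the same approach as the paper: sample $\gamma$ at macroscopic spacing and record the $N$-boxes of the sampled vertices. The paper samples adaptively --- defining $p(k+1)$ as the first exit time of $\gamma$ from $B'_N(\textbf{i}_k)$, which automatically yields the covering and forces $p(k+1)-p(k)\geq N$ --- whereas you sample at the fixed spacing $2N$ and verify covering and $*$-connectedness via your facts (i) and (ii), but the structure of the argument and the resulting bound are identical.
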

\begin{proof}
Let $\gamma=(x_i)_{1\leq i \leq n}$ be a path of $\sZ^d$ where $x_i$ is the $i$-th vertex of $\gamma$. Let $\Gamma$ be the set of $N$-boxes that $\gamma$ visits. We are going to define iteratively the macroscopic path $\widetilde{\Gamma}$. Let $p(1)=1$ and $\textbf{i}_1$ be the macroscopic site such that $x_1\in B_N(\textbf{i}_1)$. We suppose that $\textbf{i}_1,\dots,\textbf{i}_k$ and $p(1),\dots, p(k)$ are constructed. 
Let us define $$p(k+1)=\min\left\{j> p(k): x_j \notin B'_N(\textbf{i}_k)\right\}\,.$$ If this set is not empty, we set $\textbf{i}_{k+1}$ to be the macroscopic site such that $$x_{p(k+1)}\in B_N(\textbf{i}_{k+1})\,.$$ Otherwise, we stop the process, and we get that for every $j\in \{p(k),\dots, n\}$, $x_j\in B'_N(\textbf{i}_k)$. As $n$ is finite, the process will eventually stop and the two sequences $(p(1),\dots,p(r))$ and $(\textbf{i}_1,\dots,\textbf{i}_r)$ are finite. Note that the $\textbf{i}_j$ are not necessarily all different. We define $\widetilde{\Gamma}=(\textbf{i}_1,\dots,\textbf{i}_r)$. By construction, 
$$\gamma\subset \bigcup_{\textbf{i}\in\widetilde{\Gamma}}B'_N(\textbf{i})\,.$$
Notice that for every $1\leq k< r$, $\|x_{p(k+1)}-x_{p(k)}\|_1\geq N$, thus $p(k+1)-p(k)\geq N$. This leads to $N(r-1)\leq p(r)-p(1)\leq n$, and finally,
$$|\widetilde{\Gamma}|\leq 1+\frac{|\gamma|+1}{N}\, .$$
\end{proof}
\begin{rk}
This Lemma implies that if $\Gamma$ is the set of $N$-boxes that $\gamma$ visits then 
$$|\Gamma|\leq 3^d |\widetilde{\Gamma}|\leq 3^d \left(1+\frac{|\gamma|+1}{N}\right)\, .$$
\end{rk}
\section{Control of the probability that a box is good}\label{goodbox}

We need in what follows to control the quantity $\sum |C| $ where the sum is over all $C\in Bad$ such that $C\cap\Gamma\neq\emptyset$. We would like to obtain a control which is uniform in the parameter of percolation $p$. To do so, we are going to introduce a parameter $p_0>p_c(d)$ and show that exponential decay is uniform for all $p\geq p_0$. Indeed, the speed will only depend on $p_0$.
\begin{thm}\label{thm5} Let $p_0>p_c(d)$. There exist positive constants $A(p_0)$ and $B(p_0)$ such that for all $p\geq p_0$ and for all $N\geq 1$
$$\Prb(B_N\text{ is $p$-bad})\leq A(p_0)\exp(-B(p_0)N)\,.$$
\end{thm}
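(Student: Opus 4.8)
The plan is to bound the probability that box $B_N$ fails each of the three defining properties of a $p$-good box separately, and then use a union bound. For each property, the bound must decay exponentially in $N$ with a rate that is uniform over $p\geq p_0$; the standard way to achieve this uniformity is to observe that all the relevant events are increasing in the percolation configuration, so that the probability of failure at parameter $p$ is dominated by the probability of failure at parameter $p_0$. Thus it suffices to prove each estimate for the single parameter $p_0$, and the constants $A(p_0),B(p_0)$ will be the ones coming from $p_0$.

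First I would handle property $(i)$, uniqueness of a large cluster in $B'_N(\mathbf{i})$. The failure event — existence of two disjoint $p$-clusters of diameter at least $N$ inside a box of side $6N$ — is controlled by the classical results on supercritical Bernoulli percolation (Grimmett--Marstrand, or the renormalization machinery of Pisztora): for $p>p_c(d)$ there is exponential decay in $N$ of the probability that $B'_N$ contains two distinct clusters each touching, say, opposite faces or of diameter $\geq N$. Next, property $(ii)$, that this large cluster is crossing for each of the $3^d$ sub-boxes of side $2N$: again by Grimmett--Marstrand–type estimates, the probability that a given box of side $2N$ is not crossed by the (unique) giant cluster in the slightly larger box decays exponentially in $N$; combined with $(i)$ this identifies that crossing cluster with $\sC$. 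For property $(iii)$, the internal chemical-distance bound $D^{\sC'_p}(x,y)\leq 12\beta N$ for all $x,y$ in $\sC$ inside $B'_N(\mathbf{i})$: here I would invoke the Antal--Pisztora estimate stated in the introduction, which gives exponential decay in $\|y-x\|_1$ of $\Prb[0\leftrightarrow y,\,D^{\sC'_p}(0,y)>\rho\|y-x\|_1]$. Choosing $\beta$ related to the Antal--Pisztora constant $\rho$ and summing over the $O(N^{2d})$ pairs $x,y\in B'_N(\mathbf{i})$, the polynomial factor is absorbed by the exponential, yielding a bound of the form $A'\exp(-B'N)$. One subtlety: the Antal--Pisztora bound is phrased in the ambient infinite cluster $\sC_p$, so I would note that if $x,y$ are in the crossing cluster $\sC$ of $B'_N(\mathbf{i})$ — which by $(i)$ and $(ii)$ is part of the infinite cluster with overwhelming probability — then their chemical distance in $\sC'_p$ is controlled; more carefully one works with a version of the Antal--Pisztora estimate stated for finite boxes, or restricts the length bound to be realized by a path staying within a bounded dilation of $B'_N(\mathbf{i})$.

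The main obstacle is establishing the \emph{uniformity} of the exponential rate over $p\in[p_0,1]$ while keeping the constant $\beta$ in $(iii)$ fixed and independent of $p$. Stochastic domination handles the monotone events $(i)$ and $(ii)$ cleanly. For $(iii)$ the event "$D^{\sC'_p}(x,y)>12\beta N$ and $x\leftrightarrow y$" is not monotone, so one cannot simply dominate; instead I would use that the Antal--Pisztora constant $\rho$ can be taken uniform over $p\geq p_0$ (this is already part of how their renormalization argument works — the estimate holds with a common rate on any compact subinterval of $(p_c(d),1]$), and then fix $\beta$ once and for all (e.g. $\beta=\rho$) as a function of $p_0$ and $d$ only, consistently with Definition \ref{defbon}. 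Once each of the three failure probabilities is bounded by $A_j(p_0)\exp(-B_j(p_0)N)$, a union bound over $j\in\{1,2,3\}$ gives the claim with $A(p_0)=3\max_j A_j(p_0)$ and $B(p_0)=\min_j B_j(p_0)$.
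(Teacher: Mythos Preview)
Your overall architecture matches the paper's: union-bound over the three defining properties, crossing estimates for $(ii)$, Antal--Pisztora for $(iii)$, and combine. Two points, however, need correction.

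First, property $(i)$ is \emph{not} a monotone event, so stochastic domination does not ``handle it cleanly'' as you claim. The failure of $(i)$ (conditionally on $(ii)$) is the event that $B'_N$ contains a crossing cluster \emph{and} a second disjoint cluster of diameter at least $N$; opening an extra edge may merge the two clusters and thus destroy this event, while closing an edge may split the second cluster below the diameter threshold. The paper singles this out explicitly (``for properties $(i)$ and $(iii)$ a uniform bound is more delicate to obtain'') and isolates the estimate as a separate lemma (Lemma~\ref{Grim}), whose proof revisits Grimmett's Lemmas~7.78 and~7.104 (and, in dimension $2$, the analogous result of Couronn\'e--Messikh) to check that the constants appearing there can be chosen depending only on $p_0$. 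That is where the uniformity in $p\geq p_0$ for $(i)$ actually comes from; it is not a one-line monotonicity argument, even though you correctly name the underlying classical inputs.

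Second, your treatment of $(iii)$ has a gap for pairs $x,y$ that are close together. The uniform Antal--Pisztora bound (Lemma~\ref{AP}) gives $\Prb(\beta\|x-y\|_1\leq D^{\sC'_p}(x,y)<\infty)\leq \widehat{A}\exp(-\widehat{B}\|x-y\|_1)$, which is \emph{not} exponentially small in $N$ when $\|x-y\|_1=O(1)$; summing this over all $O(N^{2d})$ pairs in $B'_N$ yields only a polynomial bound, not one of the form $A'\exp(-B'N)$. The paper circumvents this by first proving a weaker property $(iii)'$ restricted to pairs with $\|x-y\|_\infty\geq N$ (so that the Antal--Pisztora exponential is genuinely $\exp(-\widehat{B}N)$), and then recovers the full $(iii)$ from $(ii)$ and $(iii)'$ via a triangle inequality through an auxiliary point $z$ in the crossing cluster that is far from both $x$ and $y$.
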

\noindent Note that the property $(ii)$ of the definition of $p$-good box is a non-decreasing event in $p$. Thus, it will be easy to bound uniformly the probability that property $(ii)$ is not satisfied by something depending only on $p_0$. However, for properties $(i)$ and $(iii)$ a uniform bound is more delicate to obtain. Before proving Theorem \ref{thm5}, we need the two following lemmas that deal with properties $(i)$ and $(iii)$.
Let $T_{m,N}(p)$ be the event that $B_N$ has a $p$-crossing cluster and contains some other $p$-open cluster $C$ having diameter at least $m$. 
\begin{lem}\label{Grim}
Let $p_0>p_c(d)$, there exist $\nu=\nu(p_0,d)>0$ and $\kappa=\kappa(p_0,d)$ such that for all $p\geq p_0$
\begin{align}\label{tmn}
\Prb(T_{m,N}(p))\leq  \kappa N^{2d}\exp(-\nu m)\, .
\end{align}
\end{lem}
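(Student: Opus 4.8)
The plan is to reduce the event $T_{m,N}(p)$ to the existence of a "second long cluster" and control this via a uniform exponential decay of connectivities in the subcritical-like regime that holds for all $p \geq p_0$. First I would fix $p_0 > p_c(d)$ and recall that, by standard results on supercritical Bernoulli percolation (Grimmett--Marstrand, see \cite{Grimmett99}), there is a.s.\ a unique infinite cluster, and more quantitatively the probability that a box $B_N$ contains two distinct clusters each of diameter $\geq m$, both disjoint from the (eventual) crossing cluster, decays exponentially in $m$ with a rate that is bounded below uniformly for $p$ in a compact subinterval of $(p_c(d),1]$. The key point to exploit is that if $B_N$ has a $p$-crossing cluster and also a disjoint cluster $C$ of diameter $\geq m$, then $C$ is a cluster of diameter at least $m$ that fails to connect to the crossing cluster inside $B_N$; such an event is, morally, a subcritical event, because the second-largest cluster in a supercritical box is small.

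The main steps, in order, would be: (1) bound $\Prb(T_{m,N}(p))$ by summing over the possible "starting vertices" $x \in B_N$ of the second cluster $C$: there are at most $(2N)^d$ choices, giving a polynomial prefactor $N^{d}$. (2) For a fixed $x$, bound the probability that $x$ lies in a cluster of diameter $\geq m$ which does not meet the crossing cluster of $B_N$. Here I would invoke the uniform exponential decay: conditionally (or via a union bound over which neighboring "block" the crossing cluster occupies), the event that $x$ is connected to distance $m$ without joining the dominant cluster behaves like a connection event for a percolation that is effectively subcritical; in the Grimmett--Marstrand renormalization one gets that this probability is at most $C(p_0)\exp(-\nu(p_0) m)$ uniformly in $p \geq p_0$ and in $N$. (3) Combine: the second vertex of the pair can be taken within distance $m$ of $x$ along $C$, but since we only need $x$ itself to certify diameter $\geq m$ we can also run a union bound over a second point $y \in B_N$ with $\|x-y\|_\infty \geq m/(2d)$ forced by the diameter bound — this contributes another factor at most $N^d$, yielding the stated $\kappa N^{2d}$. (4) Collect constants into $\kappa = \kappa(p_0,d)$ and $\nu = \nu(p_0,d)$.

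The technical heart — and the step I expect to be the main obstacle — is step (2): obtaining the exponential decay with a rate $\nu$ that is genuinely uniform over all $p \geq p_0$. The naive statement "the second-largest cluster is small" is a consequence of the Grimmett--Marstrand theorem, but extracting a rate that does not degenerate as $p \downarrow p_0$ requires care. The clean way is to note that the relevant event is monotone enough that it suffices to prove the bound at $p = p_0$ itself: if $x$ is connected to diameter $\geq m$ without meeting the crossing cluster, then removing the crossing cluster's edges leaves a configuration stochastically dominated by a $p_0$-independent percolation restricted away from the macroscopic component, to which the exponential bound at parameter $p_0$ (combined with the exponential decay of the truncated two-point function, which holds throughout the supercritical phase by \cite{Grimmett99}) applies. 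Alternatively one can re-run the block argument of Antal--Pisztora \cite{AntalPisztora} / Pisztora \cite{Pisztora} at a fixed block size calibrated to $p_0$, and observe that all the good-block probabilities are monotone in $p$, so the comparison with a highly-supercritical site percolation is uniform in $p \geq p_0$. Once uniformity in $p$ is secured, the polynomial bookkeeping in $N$ is routine and gives \eqref{tmn}.
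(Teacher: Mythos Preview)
Your outline lands close to the paper's proof in spirit --- a union bound over vertices in $B_N$ together with an exponential bound on the ``second cluster'' event, made uniform in $p$ via a monotonicity observation --- but the paper is far more economical: it simply \emph{cites} the existing proofs (Lemma~7.104 in \cite{Grimmett99} for $d\geq 3$, Theorem~9 of \cite{COURONNE200481} for $d=2$) and then pinpoints one intermediate quantity in each proof that is monotone in $p$. In the $d\geq3$ case this is the probability appearing in Lemma~7.78 of \cite{Grimmett99}; since it is non-decreasing in $p$, the parameters $\delta(p_0)$ and $L(p_0)$ chosen there at $p=p_0$ work verbatim for all $p\geq p_0$, and the explicit bound $\Prb(T_{m,N}(p))\leq d(2N+1)^{2d}\exp\!\big((\frac{m}{L(p_0)+1}-1)\log(1-\delta(p_0))\big)$ drops out. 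The $d=2$ case is handled separately by an analogous observation about Proposition~6 in \cite{COURONNE200481} (the relevant event there lives in the subcritical dual at parameter $1-p$, hence is increasing as $1-p$ decreases). You do not make this dimensional split.

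Where your sketch has a genuine soft spot is your first justification of step~(2). The event $T_{m,N}(p)$ is \emph{not} monotone in $p$, and your heuristic ``remove the crossing cluster's edges and what remains is stochastically dominated by percolation at $p_0$, then apply the truncated two-point function'' does not go through as stated: the second cluster $C$ in $B_N$ may well connect to the infinite cluster \emph{outside} $B_N$, so the truncated connectivity decay $\Prb(x\leftrightarrow y,\ x\not\leftrightarrow\infty)\leq Ce^{-c\|x-y\|}$ does not directly apply, and ``stochastic domination after removing the crossing cluster'' is not a well-defined operation without conditioning that destroys independence. Your \emph{second} alternative --- re-run a Pisztora-type block construction at a scale calibrated to $p_0$ and use that the good-block probability is monotone in $p$ --- is exactly the right fix, and is precisely what the paper extracts (in pre-packaged form) from Lemma~7.78 of \cite{Grimmett99}. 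So your proposal would succeed once you commit to that second route, but you would end up re-deriving what the paper simply quotes.
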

The following Lemma is an improvement of the result of Antal and Pisztora in \cite{AntalPisztora} that controls the probability that two connected points have a too large chemical distance. In the original result, the constants depend on $p$, we slightly modify its proof so that constants are the same for all $p\geq p_0$. This improvement is required to obtain a decay that is uniform in $p$. 
\begin{lem}\label{AP}
Let $p_0>p_c(d)$, there exist $\beta=\beta(p_0)>0$, $\widehat{A}=\widehat{A}(p_0)$ and $\widehat{B}=\widehat{B}(p_0)>0$ such that for all $p\geq p_0$
\begin{align}
\forall x \in\sZ^d\quad\Prb(\beta\|x\|_1\leq D^{\sC'_p}(0,x)<+\infty)\leq \widehat{A}\exp(-\widehat{B}\|x\|_1)\, .
\end{align}
\end{lem}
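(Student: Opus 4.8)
The plan is to adapt the Antal--Pisztora argument from \cite{AntalPisztora} while tracking the dependence of all constants on the parameter $p$, replacing any use of $p$-dependent quantities with quantities that depend only on the lower bound $p_0$. The starting point is the standard renormalization scheme: fix an auxiliary scale $L$ and declare a site $\mathbf{i}$ in the renormalized lattice $L\sZ^d$ to be \emph{white} (or good) if the block of side $L$ around it contains a unique crossing cluster for $p$-percolation, this cluster is connected to every crossing cluster of neighbouring blocks, and it has good internal chemical-distance properties; otherwise the site is \emph{black}. By the work of Grimmett and Marstrand, for $p>p_c(d)$ one can choose $L$ so that the white sites dominate a supercritical site percolation, and crucially the key block events are monotone in $p$, so the domination parameter can be taken uniform for all $p\geq p_0$ once $L=L(p_0)$ is fixed; the probability that a block is black is then at most $C(p_0)\exp(-c(p_0)L)$, with the same exponent for all $p\geq p_0$.

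The core of the argument is then deterministic/combinatorial plus a Peierls estimate. Suppose $0\leftrightarrow x$ in $\sC'_p$. One takes the renormalized path of blocks traversed by a geodesic (or simply covers a straight-line-like route from $0$ to $x$ by $O(\|x\|_1/L)$ blocks) and argues that a short chemical path exists through the white blocks, of length at most $K(p_0)\|x\|_1$, \emph{unless} the route is forced to detour around a large connected cluster of black blocks. Each such black cluster of renormalized size $k$ costs at most $O(kL)$ extra microscopic steps to bypass (using connectivity of the white boundary of the black cluster, exactly as in Lemma~\ref{lem2} / Lemma~\ref{lem1} of the present paper). Summing, one gets $D^{\sC'_p}(0,x)\leq K(p_0)\bigl(\|x\|_1/L + \sum_{\text{black }C\cap\Gamma\neq\emptyset}|C|\bigr)L$, so that the event $D^{\sC'_p}(0,x)\geq \beta\|x\|_1$ with $\beta$ chosen larger than $2K(p_0)/1$, say, forces $\sum |C| \geq \|x\|_1/(2K(p_0)L)$, i.e.\ there is a total renormalized black mass of order $\|x\|_1/L$ meeting a connected tube of blocks of cardinality $O(\|x\|_1/L)$.

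The final step is a standard Peierls/contour count. The number of connected (or $*$-connected) sets of renormalized sites of total size $m$ meeting a fixed set of $O(\|x\|_1/L)$ sites is at most $(\|x\|_1/L)\, C^{m}$ for a dimensional constant $C$, and each fixed such set is all-black with probability at most $\bigl(C(p_0)e^{-c(p_0)L}\bigr)^{m'}$ where $m'$ is at least a constant fraction of $m$ (one uses a sub-collection of black blocks that are pairwise far enough apart so that their defining events are independent, exploiting the finite range of dependence of the renormalization). Choosing $L=L(p_0)$ large enough that $C\cdot C(p_0)e^{-c(p_0)L/(\text{range})}<1/2$, the sum over $m\geq \|x\|_1/(2K(p_0)L)$ converges geometrically and is bounded by $\widehat A(p_0)\exp(-\widehat B(p_0)\|x\|_1)$, which is the claim, with $\beta=\beta(p_0)=2K(p_0)$.

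The main obstacle I anticipate is making the constants genuinely uniform in $p\geq p_0$: the Grimmett--Marstrand block estimate and the internal chemical-distance estimate inside a good block both need to be stated with $p_0$-dependent-only rates, which requires checking that the relevant events are either monotone in $p$ (so that the $p_0$ bound dominates) or can be handled by a stochastic-domination argument reducing the case of general $p\geq p_0$ to the case $p_0$. Property $(iii)$ of a good box, the internal bound $D^{\sC'_p}(x,y)\leq 12\beta N$, is exactly of this delicate type, and indeed it is Lemma~\ref{AP} itself that is being invoked to establish it at the smaller scale, so some care with the order in which scales are introduced is needed — one proves the uniform Antal--Pisztora bound first at a fixed reference scale and bootstraps. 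Beyond that, the combinatorics of counting $*$-connected black clusters and extracting an independent sub-family is routine but must be done carefully to get the decay exponent right.
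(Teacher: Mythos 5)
Your proposal is correct and takes essentially the same route as the paper, whose proof simply observes that the Antal--Pisztora renormalization argument goes through unchanged once one checks (as in Lemma~\ref{Grim}) that the block-event probabilities are monotone in $p$ or otherwise uniformly controllable, so that the scale and hence $\beta$, $\widehat A$, $\widehat B$ can be chosen to depend only on $p_0$ and $d$. The apparent circularity you flag with property $(iii)$ of Definition~\ref{defbon} is resolved exactly as you suggest: inside the Antal--Pisztora argument the internal distance within a good block at the fixed reference scale $L(p_0)$ is handled by a crude volume bound, not by invoking the lemma being proved.
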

\begin{rk}
Note that this is not an immediate corollary of \cite{AntalPisztora}. Although increasing the parameter of percolation $p$ reduces the chemical distance, it also increases the probability that two vertices are connected. Therefore the event that we aim to control is neither non-increasing neither non-decreasing in $p$.
\end{rk}
\noindent Before proving these two lemmas, we are first going to prove Theorem \ref{thm5}.
\begin{proof}[Proof of Theorem \ref{thm5}]
Let us fix $p_0>p_c(d)$. Let us denote by $(iii)'$ the property that for all $x,y\in B'_N(\textbf{i})$, if $\|x-y\|_\infty\geq N$ and if $x$ and $y$ belong to the $p$-crossing cluster $\sC$ then $D^{\sC'_p}(x,y)\leq 6\beta N$. Note that properties $(ii)$ and $(iii)'$ imply property $(iii)$. Indeed, thanks to $(ii)$, we can find $z\in\sC\cap B'_N(\textbf{i}) $ such that $\|x-z\|_\infty\geq N$ and $\|y_z\|_\infty\geq N$. Therefore, by applying $(iii)'$,
\begin{align*}
D^{\sC'_p}(x,y)&\leq D^{\sC'_p}(x,z)+D^{\sC'_p}(z,y)\\
&\leq 12\beta N \, .
\end{align*}
Thus, we can bound the probability that a $N$-box is bad by the probability that it does not satisfy one of the properties $(i)$, $(ii)$ or $(iii)'$.
Since we want to control the probability of $B_N$ being a $p$-bad box uniformly in $p$, we will emphasize the dependence of $(i)$, $(ii)$ and $(iii)'$ in $p$ by writing $(i)_p$, $(ii)_p$ and $(iii)'_p$.  First, let us prove that the probability that a $N$-box does not satisfy property $(ii)_p$, i.e., the probability for a box not to have a $p$-crossing cluster, is decaying exponentially, see for instance Theorem 7.68 in \cite{Grimmett99}. There exist positive constants $ \kappa_1(p_0)$ and $ \kappa_2(p_0)$ such that for all $p\geq p_0$
\begin{align}\label{uni2}
\Prb(B_N\text{ does not satisfies }(ii)_p)&\leq\Prb(B_N\text{ does not satisfies }(ii)_{p_0})\nonumber \\
&\leq \kappa_1(p_0)\exp(-\kappa_2(p_0)N^{d-1})\, .
\end{align}
Next, let us bound the probability that a $N$-box does not satisfy property $(iii)'_p$. Using Lemma \ref{AP}, for $p\geq p_0$,
\begin{align*}
\Prb (B_N &\text{ does not satisfy }(iii)'_p)\\
&\leq \sum_{x\in B'_N}\sum_{y\in B'_N} \mathds{1}_{\|x-y\|_\infty\geq N} \Prb\left( 6\beta N\leq D^{\sC'_p}(x,y)<+\infty\right)\\
&\leq \sum_{x\in B'_N}\sum_{y\in B'_N} \mathds{1}_{\|x-y\|_\infty\geq N} \Prb\left( \beta \|x-y\|_\infty\leq D^{\sC'_p}(x,y)<+\infty\right)\\
&\leq \sum_{x\in B'_N}\sum_{y\in B'_N} \mathds{1}_{\|x-y\|_\infty\geq N} \widehat{A}\exp(-\widehat{B}N)\\
&\leq (6N+1)^{2d}\widehat{A}\exp(-\widehat{B}N)\, .
\end{align*}
Finally, by Lemma \ref{Grim},
\begin{align*}
\Prb(B_N&\text{ is $p$-bad})\\
&\leq \Prb(B_N\text{ does not satisfies } (ii)_p)+\Prb(B_N\text{ satisfies $(ii)_p$ but not $(i)_p$})\\
&\hspace{1cm}+\Prb (B_N \text{ does not satisfy }(iii)'_p)\\
&\leq \kappa_1\exp(-\kappa_2N^{d-1}) + 3^d \kappa N^{2d}\exp\left(-\nu\frac{N}{3^d} \right)+(6N+1)^{2d}\widehat{A}\exp(-\widehat{B}N)\\
&\leq A(p_0)e^{-B(p_0)N}\, .
\end{align*}
For the second inequality, we used inequality \eqref{uni2} and the fact that the event that the $3^d$ $N$-boxes of $B'_N$ are crossing and there exist another $p$-open cluster of diameter larger than $N$ in $B'_N$ is included in the event there exists a $N$-box in $B'_N$ that has a crossing property and contains another $p$-open cluster of diameter at least $N/3^d$.
The last inequality holds for $N\geq C_0(p_0)$, where $C_0(p_0)$, $A(p_0)>0$ and $B(p_0)>0$ depends only on $p_0$ and on the dimension $d$.
\end{proof}

\begin{proof}[Proof of Lemma \ref{Grim}]
In dimension $d\geq 3$ , we refer to the proof of Lemma 7.104 in \cite{Grimmett99}. The proof of Lemma 7.104 requires the proof of Lemma 7.78. The probability controlled in Lemma 7.78 is clearly non decreasing in the parameter $p$. Thus, if we choose $\delta(p_0)$ and $L(p_0)$ as in the proof of Lemma 7.78 for $p_0>p_c(d)$, then these parameters can be kept unchanged for some $p\geq p_0$. Thanks to Lemma 7.104, we obtain 
\begin{align*}
\forall p \geq p_0,\, \Prb(T_{m,N}(p))&\leq d(2N+1)^{2d} \exp\left(\left(\frac{m}{L(p_0)+1}-1\right)\log(1-\delta(p_0))\right)\\
&\leq \frac{d.3^d}{1-\delta(p_0)}N^{2d}\exp\left(-\frac{-\log(1-\delta(p_0))}{L(p_0)+1} m\right)\, .
\end{align*}
We get the result with $\kappa= \frac{d.3^d}{1-\delta(p_0)}$ and $\nu=\frac{-\log(1-\delta(p_0))}{L(p_0)+1}>0$.

In dimension 2, the result is obtained by Couronné and Messikh in the more general setting of FK-percolation in Theorem 9 in \cite{COURONNE200481}. We proceed similarly as in dimension $d\geq3$, the constant appearing in this theorem first appeared in Proposition 6. The probability of the event considered in this proposition is clearly increasing in the parameter of the underlying percolation, it is an event for the subcritical regime of the Bernoulli percolation. Let us fix a $p_0>p_c(2)=1/2$, then $1-p_0<p_c(2)$ and we can choose the parameter $c(1-p_0)$ and keep it unchanged for some $1-p\leq 1-p_0$. In Theorem 9, we get the expected result with $c(1-p_0)$ for a $p\geq p_0$ and $g(n)=n$. 
\end{proof}
\noindent We explain now how to modify the proof of \cite{AntalPisztora} to obtain the uniformity in $p$.
\begin{proof}[Proof of Lemma \ref{AP}]
Let $p_0>p_c(d)$ and $p\geq p_0$. First note that the constant $\rho$ appearing in \cite{AntalPisztora} corresponds to our $\beta$. the proof of Lemma 2.3 in \cite{AntalPisztora} can be adapted (as we did above in the proof of Lemma \ref{Grim}) to choose constants $c_3$, $c_4$, $c_6$ and $c_7$ that depend only on $p_0$ and $d$, we do not get into details again. Thanks to this, $N$ may be chosen in the expression $(4.47)$ of \cite{AntalPisztora} such that it only depends on $p_0$ and $d$ and so is $\rho$.  This concludes the proof.
\end{proof}

\section{Probabilistic estimates}\label{ProbaEst}

We can now use the stochastic minoration by a field of independent Bernoulli variables to control the probability that the quantity $\sum |C|$ is big, where the sum is over all $C\in Bad$ such that $C\cap\Gamma\neq\emptyset$. The proof of the following Lemma is in the spirit of the work of Cox and Kesten in \cite{CoxKesten} and relies on combinatorial considerations. These combinatorial considerations were not necessary in \cite{GaretMarchandProcacciaTheret}.

We consider a path $\gamma$ and its associated lattice animal $\Gamma$. We need in the proof of the following Lemma to define $\Gamma$ as a path of macroscopic sites, that is to say a path $(\textbf{i}_k)_{k\leq r}$ in the macroscopic grid such that $\cup_{k\leq r} B_N(\textbf{i}_k)=\Gamma$ (this path may not be self-avoiding). We can choose for instance the sequence of sites that $\gamma$ visits. However, it is difficult to control the size of this sequence by the size of $\Gamma$. That is the reason why we consider the path of the macroscopic grid $\widetilde{\Gamma}$ that was introduced in Lemma \ref{probest}.

\begin{prop}\label{controlBad}
Let $p_0>p_c(d)$ and $\ep\in(0,1-p_c(d))$. There exist a constant $C_\ep\in(0,1)$ depending only on $\ep$ and a positive constant $C_1$ depending on $p_0$, $d$ and $ \beta$, such that if we set $N=C_1|\log\ep|$, then for all $p\geq p_0$ , for every $n\in\sN^*$
$$ \Prb\left(\exists \gamma \text{ starting from $0$ such that}\quad|\widetilde{\Gamma}|\leq n , \quad \sum\limits_{C\in Bad:C\cap\Gamma\neq\emptyset} |C|\geq \ep n\right)\leq C_\ep^ n$$
where $\Gamma$ is the lattice animal associated with the path $\gamma$ and $\widetilde{\Gamma}$ the macroscopic path given by Lemma \ref{probest}.
\end{prop}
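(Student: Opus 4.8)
\emph{Overview.} The plan is to reduce the statement to a large--deviation estimate for an i.i.d.\ Bernoulli field, to encode the bad clusters met by $\gamma$ as branches hanging off a ``corridor'' that follows $\widetilde\Gamma$, and to calibrate $N$ so that the resulting combinatorial sum is summable. First, the \emph{reduction to independence}: the state of a box $B_N(\mathbf i)$ depends only on the edges lying in a box of side $O_{d,\beta}(N)$ around $\mathbf i$ (properties $(i)$ and $(ii)$ are read inside $B'_N(\mathbf i)$, and for $(iii)$ it suffices to inspect geodesics of length at most $12\beta N$), so the field $\big(\ind\{B_N(\mathbf i)\text{ is $p$-bad}\}\big)_{\mathbf i\in\sZ^d}$ is $K$-dependent for some $K=K(d,\beta)$, and by Theorem~\ref{thm5} its marginals are at most $A(p_0)e^{-B(p_0)N}$, uniformly in $p\ge p_0$. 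By the Liggett--Schonmann--Stacey domination theorem it is therefore stochastically dominated, uniformly in $p\ge p_0$, by an i.i.d.\ Bernoulli$(q(N))$ field on $\sZ^d$ with $q(N)\le A'\exp(-B'N)$, where $A',B'$ depend only on $p_0,d,\beta$. Since the event in the statement is increasing in the set of bad boxes, it suffices to bound its probability for this i.i.d.\ field, in which a bad cluster is just a cluster of Bernoulli$(q)$ percolation.

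\emph{The corridor encoding.} Suppose the event occurs and fix a witnessing $\gamma$; by Lemma~\ref{probest} the associated macroscopic path $\widetilde\Gamma=(\mathbf i_1,\dots,\mathbf i_r)$ has $r\le n$, and every bad cluster meeting $\Gamma$ contains an $N$-box within $\ell^\infty$-distance $1$ of $\widetilde\Gamma$. List the bad clusters met by $\Gamma$ as $C_1,\dots,C_\ell$ in the order in which $\gamma$ first reaches them; applying the loop-removing extractions from the proof of Lemma~\ref{lem1} one may assume they are attached to $\widetilde\Gamma$ in this order, and then, writing $b_0$ for the macroscopic $\ell^1$-distance from the origin box to the root of $C_1$ and $b_t$ for the distance from $C_t$ to $C_{t+1}$, a telescoping along $\widetilde\Gamma$ gives $\sum_t b_t\le c_d n$. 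Thus the event is contained in the existence of $\ell\ge1$, integers $b_0,\dots,b_{\ell-1}\ge0$ with $\sum_t b_t\le c_d n$, and pairwise disjoint rooted $*$-connected box sets $C_1,\dots,C_\ell$ with $\sum_t|C_t|\ge\ep n$, all of whose boxes are bad and such that the root of each $C_{t+1}$ lies at $\ell^1$-distance $\le b_t$ from that of $C_t$. Since a $*$-connected box set of size $s$ rooted at a fixed box has probability at most $q^s$ of being all bad and at most $\lambda_d^{\,s}$ possible shapes, a union bound gives
\begin{equation*}
\Prb(\text{event})\ \le\ \sum_{\ell\ge1}\ \sum_{\substack{b_0,\dots,b_{\ell-1}\ge0\\ \sum_t b_t\le c_d n}}\ \sum_{\substack{s_1,\dots,s_\ell\ge1\\ \sum_t s_t\ge\ep n}}\Big(\prod_{t=0}^{\ell-1}(2b_t+1)^d\Big)\Big(\prod_{t=1}^\ell(\lambda_d q)^{s_t}\Big).
\end{equation*}

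\emph{Counting and calibration.} The $b$-sum is controlled by the arithmetic--geometric mean inequality $\prod_t(2b_t+1)^d\le\big((2c_dn+\ell)/\ell\big)^{d\ell}$ together with the bound $\binom{c_dn+\ell}{\ell}\le(e(c_dn+\ell)/\ell)^\ell$ on the number of budgets, which yields at most $(C_dn/\ell)^{(d+1)\ell}$. In the range $\ell\le\ep n$, the $s$-sum is at most $(e\ep n/\ell)^\ell(\lambda_dq)^{\ep n}$ (the number of compositions of $S$ into $\ell$ positive parts is $\le(eS/\ell)^\ell$, and $(\lambda_dq)^S$ decays geometrically in $S$ for $S\ge\ep n$ once $\lambda_dq<\tfrac12$); the product of the two bounds is increasing in $\ell$ on $[1,\ep n]$, hence maximal at $\ell=\ep n$, where the factor $n^{\ep n}$ cancels against $(\ep n/\ell)^\ell$. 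In the range $\ell>\ep n$, one uses $\sum_t s_t\ge\ell$, so the $s$-sum is at most $(2\lambda_dq)^\ell$ and the summand is $\big((C_dn/\ell)^{d+1}2\lambda_dq\big)^\ell$ with base $<(C_d/\ep)^{d+1}2\lambda_dq$. Summing over $\ell$ in both ranges one obtains, up to a factor polynomial in $n$,
\begin{equation*}
\Prb(\text{event})\ \le\ \mathrm{poly}(n)\cdot\Big(\frac{C_d^{\,d+1}e\,\lambda_d\,q(N)}{\ep^{\,d+1}}\Big)^{\ep n}\ +\ \Big(\frac{2\,C_d^{\,d+1}\lambda_d\,q(N)}{\ep^{\,d+1}}\Big)^{\ep n}.
\end{equation*}
It then suffices to choose $C_1=C_1(p_0,d,\beta)$ so large that $N=C_1|\log\ep|$ forces $q(N)\le A'\ep^{\,B'C_1}\le \ep^{\,d+1}/(4C_d^{\,d+1}e\,\lambda_d)$; each term is then $\le 2^{-\ep n}$ up to the polynomial prefactor, and absorbing that prefactor yields $\Prb(\text{event})\le C_\ep^{\,n}$ for a suitable $C_\ep=C_\ep(\ep)\in(0,1)$ (taken close to $1$ to accommodate the finitely many small $n$).

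\emph{The main obstacle.} The delicate point — and where this improves on \cite{GaretMarchandProcacciaTheret} — is the combinatorial count. The naive union bound, which records the whole macroscopic path $\widetilde\Gamma$, costs a factor $c_d^{\,n}$ and therefore only yields the estimate for $N\gtrsim1/\ep$, which is far too weak for Theorem~\ref{heart}. The corridor encoding avoids this by recording only the \emph{relative} placement of the $\ell$ bad clusters along $\widetilde\Gamma$; the arithmetic--geometric step then gains from the fact that the clusters are few ($\ell\lesssim\ep n$, since $\sum_t|C_t|$ is suppressed by $q^{\sum_t|C_t|}$) compared with the total travel budget ($\sum_t b_t\lesssim n$), which turns the placement entropy into $(C/\ep)^{O(\ep n)}$ — merely polynomial in $1/\ep$ rather than exponential in $n$. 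This is precisely what makes the logarithmic choice $N=C_1|\log\ep|$, and hence the rate $\kappa_d(q-p)|\log(q-p)|$ of Theorem~\ref{heart}, possible.
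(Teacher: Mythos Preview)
Your argument is correct and reaches the right conclusion, but the implementation differs from the paper's in two substantive ways.

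For the dependence, the paper does not invoke Liggett--Schonmann--Stacey. Instead it notes that the state of $B_N(\mathbf i)$ depends only on boxes within $\ell^\infty$-distance $13\beta$, so from any set of $j$ bad boxes one can deterministically extract a subset of size $\ge j/(27\beta)^d$ whose states are genuinely independent; this gives the bound $\alpha^j$ with $\alpha=\big(A(p_0)e^{-B(p_0)N}\big)^{1/(27\beta)^d}$ directly. Your LSS route is equally valid and perhaps cleaner, at the cost of invoking an external theorem.

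For the corridor, the paper makes a genuinely different choice. It introduces an auxiliary integer scale $K\approx 1/\ep$ and covers $\widetilde\Gamma$ by a chain of hypercubes $S(v(0)),\dots,S(v(\tau))$ of side $2K$, with $\tau\le 1+n/K$ and each $v(i+1)\in\partial S(v(i))$; the number of such corridors is at most $(c_dK)^{(d-1)\tau}\approx (c_d/\ep)^{2(d-1)\ep n}$. Inside a fixed corridor (of volume $C_3n\approx nK^{d-1}$) the bad-cluster configurations are counted by $\sum_{j\ge\ep n}(7^d\alpha)^j\binom{C_3n+j-1}{j}$, and a Stirling-type inequality finishes. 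Your encoding replaces the physical $K$-cubes by the purely combinatorial budget vector $(b_t)$ with an AM--GM step and a case split on $\ell\lessgtr\ep n$. Both schemes convert the placement entropy into a factor of order $(C/\ep)^{O(\ep n)}$, which is exactly what allows $N=C_1|\log\ep|$; your version avoids the extra scale $K$, while the paper's version is more geometric and sidesteps the case split by letting $K$ absorb the role of $\ell$.

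One small wrinkle: your ``telescoping along $\widetilde\Gamma$ gives $\sum_t b_t\le c_dn$'' needs a word of care. What telescopes is the sequence of indices along $\widetilde\Gamma$ at which the clusters are first met; the $\ell^1$-distance between successive roots is bounded by that increment only up to an $O(1)$ correction per step (roots are merely $*$-neighbors of points of $\widetilde\Gamma$), so the budget acquires an extra $O(\ell)\le O(n)$ term---harmless, but worth stating. The appeal to the loop-removing extractions of Lemma~\ref{lem1} is neither needed nor quite apt here; ordering the clusters by first visit already suffices.
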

\begin{proof}

Let us consider a path $\gamma$ starting from $0$, its associated lattice animal $\Gamma$, \textit{i.e.}, the set of boxes $\gamma$ visits and its associated path on the macroscopic grid  $\widetilde{\Gamma}=(\widetilde{\Gamma}(k))_{0\leq k \leq r}$ as defined in Lemma \ref{probest}.
We first want to include $\widetilde{\Gamma}$ in a subset of the macroscopic grid. Of course, $\widetilde{\Gamma}$ is included in the hypercube of side-length $2r$ centered at $\widetilde{\Gamma}(0)$, but we need to have a more precise control. Let $K\geq 1$ be an integer that we will choose later.
Let $v$ be a site, we denote by $S(v)$ the hypercube of side-length $2K$ centered at $v$ and by $\partial S(v)$ its inner boundary:
  $$S(v)=\{w\in\sZ^d:  \| w-v\|_\infty \leq K\}\quad\text{and}\quad \partial S(v)=\{w\in\sZ^d:  \| w-v\|_\infty =K\}\,.$$ 

\begin{figure}[!ht]
\def\svgwidth{1\textwidth}
\label{fig4}
 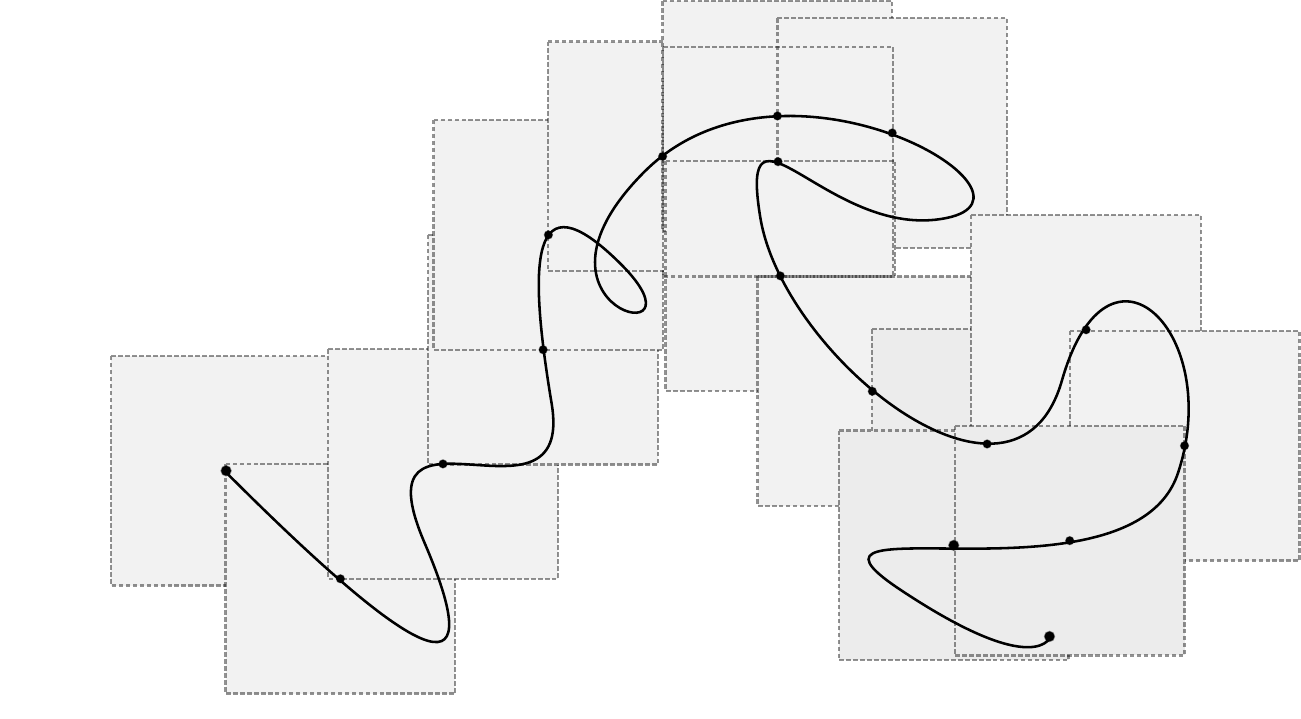
 \caption{Construction of $v(0),\dots,v(\tau)$}
\end{figure}
\noindent We define $v(0)=\widetilde{\Gamma}(0)$, $p_0=0$. If $p_0,\dots,p_k$ and $v(0),\dots,v(k)$ are constructed, we define if any $$p_{k+1}=\min\left\{i\in\{p_{k}+1,\dots,r\}:\widetilde{\Gamma}(i)\in \partial S(v(k))\right\} \quad\text{and} \quad v(k+1)=\widetilde{\Gamma}(p_{k+1})\,.$$ If there is no such index we stop the process.
Since $p_{k+1}-p_{k}\geq K$, there are at most $1+r/K$ such $p_{k}$. Notice that $1+r/K\leq 1+n/K$ on the event $\{|\widetilde{\Gamma}|\leq n\}$. We define $\tau=1+n/K$.
On the event $\{|\widetilde{\Gamma}|\leq n\}$, the macroscopic path $\widetilde{\Gamma}$ is contained in the union of those hypercubes:
$$D(v(0),\dots,v(\tau))=\bigcup_{i=0}^\tau S(v(i))\,.$$
If we stop the process for a $k<\tau$, we artificially complete the sequence until attaining $\tau$ by setting for $k<j\leq \tau$, $v(j)=v(k)$. See figure \ref{fig4}, the corridor $D(v(0),\dots,v(\tau))$ is represented by the grey section. By construction, for all $1\leq k\leq r$, there exists a $j\leq \tau$ such that $\widetilde{\Gamma}(k)$ is in the strict interior of $S(v(j))$, so we have $$\Gamma\subset \bigcup_{k=1}^r\Big\{\,\textbf{j},\, \textbf{j} \text{ is $*$-connected to }\widetilde{\Gamma}(k)\,\Big\}\subset D(v(0),\dots,v(\tau))\,.$$
Thus, we obtain 
\begin{align*}
\Prb&\left(\exists \gamma \text{ starting from $0$ such that}\quad|\widetilde{\Gamma}|\leq n , \quad \sum\limits_{C\in Bad:C\cap\Gamma\neq\emptyset} |C|\geq \ep n\right)\\
&\leq \Prb\left(\bigcup_{v(0),\dots,v(\tau)}\left\{ \begin{array}{c}\exists \gamma \text{ starting from $0$ such that}\\  \sum\limits_{C\in Bad:C\cap\Gamma\neq\emptyset} |C|\geq \ep n,\, \Gamma \subset D(v(0),\dots,v(\tau))\end{array}\right\}\right)\\
&\leq \sum\limits_{v(0),\dots,v(\tau)}\Prb\left(\begin{array}{c}\exists \gamma \text{ starting from $0$ such that}\\ \sum\limits_{C\in Bad:C\cap\Gamma\neq\emptyset} |C|\geq \ep n,\, \Gamma \subset D(v(0),\dots,v(\tau)) \end{array}\right)\\
&\leq \sum\limits_{v(0),\dots,v(\tau)}\Prb\left(  \sum\limits_{\substack{C\in Bad:\\ C\cap D(v(0),\dots,v(\tau))\neq\emptyset}} |C|\geq \ep n \right)\\
&\leq\sum\limits_{v(0),\dots,v(\tau)}\sum_{j\geq \ep n}\Prb\left(  \sum\limits_{\substack{C\in Bad:\\ C\cap D(v(0),\dots,v(\tau))\neq\emptyset}} |C|=j\right)
\end{align*}
where the first sum is over the sites $v(0),\dots,v(\tau)$ satisfying $v(0)=\Gamma(1)$ and for all $0\leq k <\tau$, $v(k+1)\in \partial S(v(k))\cup \{v(k)\}$. Since $\partial S(v)\cup\{v\}$ contains at most $(c_dK)^{d-1}$ sites where $c_d\geq 1$ is a constant depending only on the dimension, the sum over the sites $v(0),\dots,v(\tau)$ contains at most $$(c_dK)^{(d-1)\tau}\leq(c_dK)^{\frac{2n(d-1)}{K}}:=C_2^n$$ terms for $n$ large enough. For any fixed $v(0),\dots, v(\tau)$, $D(v(0),\dots,v(\tau))$ contains at most $$(\tau+1)(2K+1)^d\leq(n/K+2)(2K+1)^d\leq 2 n (3K)^d:=C_3n$$ macroscopic sites. Let us recall that for a bad macroscopic site $\textbf{i}$,  $C(\textbf{i})$ denotes the connected cluster of bad macroscopic sites containing $\textbf{i}$. Let us notice that the following event $$\left\{ \sum\limits_{\substack{C\in Bad:\\ C\cap D(v(0),\dots,v(\tau))\neq\emptyset}} |C|=j\right\}$$ is included in the event:  there exist an integer $\rho\leq C_3n$ and distinct bad macroscopic sites $\textbf{i}_1,\dots,\textbf{i}_\rho\in D(v(0),\dots,v(\tau)) $, disjoint connected components $\bar{C}_1,\dots,\bar{C}_\rho$ such that for all $1\leq k \leq \rho$, $C(\textbf{i}_k)=\bar{C}_k$ and $\sum_{k=1}^\rho|\bar{C}_k|=j $.
Therefore, for any fixed $v(0),\dots, v(\tau)$, 
\begin{align}\label{bar1}
&\Prb\left(  \sum\limits_{\substack{C\in Bad:\\ C\cap D(v(0),\dots,v(\tau))\neq\emptyset}} |C|=j\right)\nonumber\\
&=\sum_{\rho=1}^{C_3n}\,\sum_{\substack{\textbf{i}_1\in D(v(0),\dots,v(\tau))\\\dots\\ \textbf{i}_\rho\in D(v(0),\dots,v(\tau))\\ \forall k\neq l,\textbf{i}_k\neq \textbf{i}_l}}\,\sum_{\substack{j_1,\dots,j_\rho\geq 1\\ j_1+\dots+j_\rho=j}} \,\sum_{\substack{C_1\in \Animals_{\textbf{i}_1}^{j_1}\\ \dots \\ C_\rho\in \Animals_{\textbf{i}_\rho}^{j_\rho}}}\Prb\left(\begin{array}{c}\forall 1\leq k \leq \rho\\C(\textbf{i}_k)=\bar{C}_k,\\ \,\sum_{k=1}^\rho|\bar{C}_k|=j\end{array}\right)
\end{align}  
where $\Animals_\textbf{v}^k$ is the set of connected macroscopic sites of size $k$ containing the site $\textbf{v}$. We have $|\Animals_\textbf{v}^k|\leq(7^d)^k$ (see for instance Grimmett \cite{Grimmett99}, p85). There are at most $\binom{C_3n}{\rho}$ ways of choosing the sites $\textbf{i}_1,\dots,\textbf{i}_\rho$. Thus, if we fix the sites  $\textbf{i}_1,\dots,\textbf{i}_\rho$ the number of possible choices of the connected components $\bar{C}_1,\dots,\bar{C}_\rho$ such that for all $1\leq k \leq \rho$, $C(\textbf{i}_k)=\bar{C}_k$ and $\sum_{k=1}^\rho|\bar{C}_k|=j$ is at most: 
$$\sum_{\substack{j_1,\dots,j_\rho\geq 1\\ j_1+\dots+j_\rho=j}} (7^d)^{j_1}\cdots(7^d)^{j_\rho}=(7^d)^j \sum_{\substack{j_1,\dots,j_\rho\geq 1\\ j_1+\dots+j_\rho=j}} 1\, .$$
Next we need to estimate, for given sites  $\textbf{i}_1,\dots,\textbf{i}_\rho$  and disjoint connected components  $\bar{C}_1,\dots,\bar{C}_\rho$, the probability  that for all $1\leq k \leq \rho$, $C(\textbf{i}_k)=\bar{C}_k$. For all sites $\textbf{i}\in\cup _{k=1}^\rho \bar{C}_k$, the $N$-box $B_N(\textbf{i})$ is bad. There is a short range of dependence between the state of the boxes. However, by definition of a $p$-good box, the state of $B_N(\textbf{i})$ only depends on boxes $B_N(\textbf{j})$ such that $\|\textbf{i}-\textbf{j}\|_\infty \leq 13\beta$. Thus, if  $\|\textbf{i}-\textbf{j}\|_\infty \geq 27\beta$ the state of the boxes $B_N(\textbf{i})$ and $B_N(\textbf{j})$ are independent. We can deterministically extract from $\cup _{k=1}^\rho \bar{C}_k$ a set of macroscopic site $\cE$ such that $|\cE|\geq j/(27\beta)^d$  and for any $\textbf{i}\neq\textbf{j} \in\cE$, the state of the boxes $B_N(\textbf{i})$ and $B_N(\textbf{j})$ are independent. Therefore, we have using Proposition \ref{thm5}
\begin{align}\label{bar2}
\Prb\left(\forall 1\leq k \leq \rho,\,C(\textbf{i}_k)=\bar{C}_k, \,\sum_{k=1}^\rho|\bar{C}_k|=j\right)&\leq \Prb \left(\forall \textbf{i}\in\cE, \,B_N(\textbf{i})\text{ is $p$-bad}\right)\nonumber\\
&\leq \Prb(B_N(\textbf{0})\text{ is $p$-bad})^{ j/(27\beta)^d}\nonumber\\
&\leq \left(A(p_0)\exp(-B(p_0)N(\ep))\right) ^{ j/(27\beta)^d}\,.
\end{align}
In what follows, we set $\alpha=\alpha(\ep)=  \left(A(p_0)\exp(-B(p_0)N(\ep))\right)^{ 1/(27\beta)^d}$ in order to lighten the notations. We aim to find an expression of $\alpha(\ep)$ such that we get the upper bound stated in the Proposition. The expression of $N(\ep)$ will be determined by the choice of $\alpha(\ep)$.
Combining inequalities \eqref{bar1} and \eqref{bar2}, we obtain
\begin{align*}
\Prb\left(  \sum\limits_{\substack{C\in Bad:\\ C\cap D(v(0),\dots,v(\tau))\neq\emptyset}} |C|=j\right)\leq\binom{C_3n}{\rho} (7^d\alpha)^j \sum_{\substack{j_1,\dots,j_\rho\geq 1\\ j_1+\dots+j_\rho=j}} 1\, 
\end{align*}
and so
\begin{align*}
\Prb&\left(\exists \gamma \text{ starting from $0$ such that}\quad|\widetilde{\Gamma}|\leq n , \quad \sum\limits_{C\in Bad:C\cap\Gamma\neq\emptyset} |C|\geq \ep n\right)\\
&\hspace{4cm}\leq C_2^n\sum_{j\geq\ep n}   (7^d\alpha)^j \sum_{\rho=1}^{C_3n}\binom{C_3n}{\rho}\hfill \sum_{\substack{j_1,\dots,j_\rho\geq 1\\ j_1+\dots+j_\rho=j}} 1\, .
\end{align*}
Notice that
\begin{align*}
\sum_{\rho=1}^{C_3n}\binom{C_3n}{\rho} \sum_{\substack{j_1,\dots,j_\rho\geq 1\\ j_1+\dots+j_\rho=j}} 1= \sum_{\substack{j_1,\dots, j_{C_3.n}\geq 0\\ j_1+\dots+ j_{C_3.n}=j}}  1 = \binom{C_3 n+j-1}{j}\,.
\end{align*}
To bound those terms we will need the following inequality, for $r\geq 3$, $N\in\sN^*$ and a real $z$ such that  $0<ez(1+\frac{r}{N})<1$:
\begin{align}\label{stirling}
\sum_{j=N}^\infty z^j\binom{r+j-1}{j}\leq \nu \frac{(ez(1+\frac{r}{N}))^N}{1-ez(1+\frac{r}{N})}
\end{align} 
where $\nu$ is an absolute constant. This inequality was present in \cite{CoxKesten} but without proof, for completeness we will give a proof of \eqref{stirling} at the end of the proof of Proposition \ref{controlBad}. Using inequality \eqref{stirling} and assuming $0<e7^d\alpha(\ep) (1+\frac{C_3}{\ep })<1$, we get,
\begin{align*}
\Prb&\left(\exists \gamma \text{ starting from $0$ such that}\quad|\widetilde{\Gamma}|\leq n , \quad \sum\limits_{C\in Bad:C\cap\Gamma\neq\emptyset} |C|\geq \ep n\right)\\
&\hspace{5cm}\leq C_2^n\sum_{j\geq\ep n}   (7^d\alpha)^j \binom{C_3 n+j-1}{j}\\
&\hspace{5cm}\leq  \nu C_2^n\frac{\left[e7^d\alpha (\ep)(1+\frac{C_3}{\ep })\right]^{\ep n}}{1-e7^d\alpha(\ep) (1+\frac{C_3}{\ep })} \, .
\end{align*}
Let us recall that $C_2=(c_dK)^{2(d-1)/K}$ and $C_3=2(3K)^d$. We have to choose $K(\ep)$, $\alpha(\ep)$ and a constant $0<C_\ep<1$ such that $C_2\left[e7^d\alpha (\ep)(1+\frac{C_3}{\ep })\right]^{\ep }<C_\ep$ that is to say 
\begin{align}\label{cond2}
(c_dK)^\frac{2(d-1)}{K}\left[e7^d\alpha (\ep)(1+\frac{2(3K)^d}{\ep })\right]^{\ep }<C_\ep\, .
\end{align} 
Note that the condition \eqref{cond2} implies the condition $0<e7^d\alpha(\ep) (1+\frac{C_3}{\ep })<1$. We fix $K$ the unique integer such that $\frac{1}{\ep}\leq K<\frac{1}{\ep}+1\leq \frac{2}{\ep}$. We recall that $\ep<1$. Thus,
\begin{align*}
(c_dK)^\frac{2(d-1)}{K}&\left[e7^d\alpha (\ep)(1+\frac{2(3K)^d}{\ep})\right]^{\ep }\\
&\leq (c_dK)^\frac{2d}{K}\left[e7^d\alpha (\ep)\frac{4(3K)^d}{\ep}\right]^{\ep}\\
&\leq \exp\left[\frac{2d}{K}\log (c_dK)+\ep \log \left(e7^d\alpha (\ep)\frac{4(3K)^d}{\ep }\right) \right]\\
&\leq \exp\left[2d\ep\log\left(\frac{2c_d}{\ep}\right)+\ep \log \left(e7^d\alpha (\ep)\frac{4(3\frac{2}{\ep})^d}{\ep }\right) \right]\\
&\leq \exp\Bigg[-2d\ep\log\ep+d\ep\log(2c_d)
 +\ep \log \left(4e(42)^d\alpha (\ep)\frac{1}{\ep^{d+1}}\right) \Bigg]\, .\\
\end{align*}
We set $$\alpha(\ep)=\left(2c_d\right)^{d}\frac{\ep^r}{4e(42)^d}$$ where $r$ is the smallest integer such that $r\geq 3d+2$. We obtain 
\begin{align*}
(c_dK)^\frac{d}{K}\left[e7^d\alpha (\ep)(1+\frac{2(3K)^d}{\ep})\right]^{\ep }&\leq \exp((r-(3d+1))\ep\log\ep)\\
&\leq \exp(\ep\log\ep)<1\,.
\end{align*}
Therefore there exists a positive constant $C_1$ depending on $\beta$, $d$, $p_0$ such that 
$$N(\ep)=C_1|\log \ep|\,.$$
It remains now to prove inequality \eqref{stirling} to conclude. To show this inequality, we need a version of Stirling's formula with bounds: for all $n\in\sN^*$, one has
$$\sqrt{2\pi}\,n^{n+\frac{1}{2}}e^{-n}\leq n! \leq e\,n^{n+\frac{1}{2}}e^{-n}\,,$$
thus,
\begin{align*}
\sum_{j=N}^\infty z^j\binom{r+j-1}{j}&=\sum_{j=N}^\infty z^j\frac{(r+j-1)!}{j!(r-1)!}\\
&\leq \sum_{j=N}^\infty z^j \frac{e\, (r+j-1)^{r+j-\frac{1}{2}}e^{-(r+j-1)}}{2\pi \,j^{j+\frac{1}{2}}(r-1)^{r-\frac{1}{2}}e^{-(r+j-1)}}\\
&= \sum_{j=N}^\infty \frac{e}{2\pi} z^j \left(\frac{r+j-1}{j}\right)^j \left(\frac{r+j-1}{r-1}\right)^{r-\frac{1}{2}}j^{-\frac{1}{2}}\\
&\leq \sum_{j=N}^\infty \frac{e}{2\pi} z^j \left(1+\frac{r}{N}\right)^j \left(1+\frac{j}{r-1}\right)^{r-1}\left(\frac{1}{j}+\frac{1}{r-1}\right)^{\frac{1}{2}}\\
&\leq \sum_{j=N}^\infty \frac{e}{2\pi} z^j \left(1+\frac{r}{N}\right)^j e^{(r-1)\log(1+j/(r-1))}\\
&\leq \sum_{j=N}^\infty \frac{e}{2\pi} (ez)^j \left(1+\frac{r}{N}\right)^j=  \frac{e}{2\pi}  \frac{(ez(1+\frac{r}{N}))^N}{1-ez(1+\frac{r}{N})}
\end{align*}
where we use in the last inequality the fact that for all $x>0$, $\log(1+x)\leq x$.
\end{proof}

\section{Regularity of the time constant}\label{estimate}
In this section, we prove the main result Theorem \ref{heart} and its Corollary \ref{cor1}. Before proving this Theorem, we need to prove two lemmas.
The following Lemma  enables to control the number of $p$-closed edges $|\gamma_c|$ in a geodesic $\gamma$ between two given points $y$ and $z$ in the infinite cluster $\sC_p$. 
We denote by $F_x$ the event that $0,x\in\sC_p$ and the $N$-boxes containing $0$ and $x$ belong to an infinite cluster of $p$-good boxes.
\begin{lem}\label{geo}
Let $p_c(d)< p\leq q$. Let us consider $x\in\sZ^d$. Then, for  $\delta>0$
\begin{align*}
\Prb\left(F_x,\,D^{\sC_p}(0,x)> D^{\sC_q}(0,x)\left(1+\rho_d N\left(\frac{q-p}{q}+\delta\right)\right)+\rho_dN \sum\limits_{\substack{C\in Bad:\\C\cap\Gamma\neq\emptyset}} |C| \right)&\\
\leq e^{-2\delta^2{ \|x\|_1}}\, .&
\end{align*}
where $\Gamma$ is the lattice animal of $N$-boxes visited by an optimal path $\gamma$ between $0$ and $x$ in $\sC_q$.

\end{lem}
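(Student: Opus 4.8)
The plan is to compare $D^{\sC_p}(0,x)$ with $D^{\sC_q}(0,x)$ by modifying a single $q$-geodesic, and to control the number of $p$-closed edges that must be bypassed by a concentration argument. I would work under the standard monotone coupling: attach to each edge $e$ an independent uniform random variable $U_e$ on $[0,1]$ and call $e$ $r$-open when $U_e\le r$, so that every $p$-open edge is $q$-open. Let $\gamma$ be a shortest $q$-open path from $0$ to $x$, selected by a deterministic tie-breaking rule among all such geodesics, and let $\Gamma$ be the lattice animal of $N$-boxes it visits; both $\gamma$ and $\Gamma$ are measurable with respect to $\cF_q$, the $\sigma$-field generated by $(\ind_{U_e\le q})_{e\in\E^d}$. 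On $F_x$ the hypotheses of Lemma \ref{lem1} hold with $y=0$, $z=x$, so there is a $p$-open path $\gamma'$ from $0$ to $x$, necessarily contained in $\sC_p$, with
\[
|\gamma'\setminus\gamma|\le \rho_d N\Big(\sum_{C\in Bad:\,C\cap\Gamma\neq\emptyset}|C|+|\gamma_c|\Big),
\]
and hence $D^{\sC_p}(0,x)\le|\gamma'|\le|\gamma|+\rho_d N\big(\sum_{C\in Bad:\,C\cap\Gamma\neq\emptyset}|C|+|\gamma_c|\big)$.

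Since $|\gamma|=D^{\sC_q}(0,x)$, a one-line cancellation shows that on the event $E$ appearing in the statement one must have $|\gamma_c|>D^{\sC_q}(0,x)\big(\tfrac{q-p}{q}+\delta\big)=|\gamma|\big(\tfrac{q-p}{q}+\delta\big)$; and since $F_x$ forces $0\leftrightarrow x$ in $\sC'_q$, the event $E$ is contained in $\{0\leftrightarrow x\text{ in }\sC'_q\}\cap\{|\gamma_c|>|\gamma|(\tfrac{q-p}{q}+\delta)\}$. I would then condition on $\cF_q$: on $\{0\leftrightarrow x\}$ the path $\gamma$ is a deterministic function of $\cF_q$, with $L:=|\gamma|=D^{\sC_q}(0,x)$ edges, all $q$-open, and conditionally on $\cF_q$ the variables $U_e$ on the $q$-open edges are independent and uniform on $[0,q]$; hence each edge of $\gamma$ is independently $p$-closed with probability $\tfrac{q-p}{q}$, so $|\gamma_c|$ is, given $\cF_q$, a $\mathrm{Binomial}(L,\tfrac{q-p}{q})$ variable. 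Hoeffding's inequality then gives, on $\{0\leftrightarrow x\}$,
\[
\Prb\!\left(|\gamma_c|>L\big(\tfrac{q-p}{q}+\delta\big)\;\Big|\;\cF_q\right)\le e^{-2\delta^2 L}.
\]
Finally, any path from $0$ to $x$ in $\sZ^d$ has at least $\|x\|_1$ edges, so $L=D^{\sC_q}(0,x)\ge\|x\|_1$ and $e^{-2\delta^2 L}\le e^{-2\delta^2\|x\|_1}$; since $\{0\leftrightarrow x\}$ is $\cF_q$-measurable, taking expectations yields $\Prb(E)\le e^{-2\delta^2\|x\|_1}$.

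The only genuinely probabilistic point — and the step I expect to require the most care to state cleanly — is the conditional independence used at the end: the whole argument hinges on the fact that a $q$-geodesic and its lattice animal are functions of the $q$-configuration alone, so that after conditioning on $\cF_q$ the $p$-states of the edges of $\gamma$ are independent $\mathrm{Bernoulli}(\tfrac{p}{q})$ variables and Hoeffding applies with the sharp constant $2$ in the exponent. Everything else is bookkeeping: the deterministic path-modification estimate of Lemma \ref{lem1}, the identity $|\gamma|=D^{\sC_q}(0,x)$, and the elementary bound $D^{\sC_q}(0,x)\ge\|x\|_1$. The degenerate case $x=0$ (where $E=\emptyset$ and the claimed bound is trivially $1$) should be dispatched separately.
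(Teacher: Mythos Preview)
Your proof is correct and follows the same overall strategy as the paper: apply Lemma~\ref{lem1} to reduce the event to $\{|\gamma_c|>|\gamma|(\tfrac{q-p}{q}+\delta)\}$, then use a concentration bound for the number of $p$-closed edges along a $q$-geodesic, and finish with $|\gamma|\ge\|x\|_1$.

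The one noteworthy difference is in the handling of the independence step. The paper explicitly avoids the standard uniform coupling, remarking that under it the $p$-state of an edge is \emph{not} independent of $\gamma$; instead it introduces two independent layers $V\sim\mathrm{Ber}(q)$ and $Z\sim\mathrm{Ber}(p/q)$ per edge, so that $\{\gamma=r\}$ is a $V$-event and $|\gamma_c|$ is a $Z$-event, and then sums over self-avoiding paths. You stay with the standard coupling and observe that, while unconditional independence fails, \emph{conditional} independence given $\cF_q$ does hold: on $q$-open edges the variables $\ind_{U_e>p}$ are conditionally i.i.d.\ $\mathrm{Ber}(\tfrac{q-p}{q})$, and $\gamma$ is $\cF_q$-measurable. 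The two devices are equivalent (your conditioning is exactly the disintegration that produces the paper's $Z$-layer), but your formulation is arguably cleaner and sidesteps the path-enumeration; the paper's version has the advantage of making the independence completely explicit without invoking regular conditional distributions.
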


\begin{proof} On the event $F_x$, we have $0,x\in\sC_p\subset\sC_q$ so there exists a $q$-open path joining $0$ to $x$, let $\gamma$ be an optimal one. Necessarily, we have $|\gamma|\geq \|x\|_1$. We consider the modification $\gamma'$ given by Lemma \ref{lem1}. As $\gamma'$ is $p$-open,
\begin{align}\label{eq6.1.1}
D^{\sC_p}(0,x)<|\gamma'|&\leq |\gamma\cap\gamma'|+|\gamma'\setminus\gamma|\nonumber\\
&\leq |\gamma|+\rho_d\left(N|\gamma_c|+N\sum\limits_{C\in Bad:C\cap\Gamma\neq\emptyset} |C| \right)\nonumber\\
&\leq D^{\sC_q}(0,x)+\rho_d\left(N|\gamma_c|+N\sum\limits_{C\in Bad:C\cap\Gamma\neq\emptyset} |C| \right)\, .
\end{align}
We want to control the size of $\gamma_c$. For that purpose, we want to introduce a coupling of the percolations $q$ and $p$, such that if any edge is $p$-open then it is $q$-open, and we want the random path $\gamma$, which is an optimal $q$-open path between $0$ and $x$, to be independent of the $p$-state of any edge, i.e., any edge is $p$-open or $p$-closed independently of $\gamma$. This is not the case when we use the classic coupling with a unique uniform random variable for each edge. Here we introduce two sources of randomness to ease the computations by making the choice of $\gamma$ independent from the $p$-state of its edges. We proceed in the following way: with each edge we associate two independent Bernoulli random variables $V$ and $Z$ of parameters respectively $q$ and $p/q$. Then $W=Z\cdot V$ is also a Bernoulli random variable of parameter $p$. This implies
\begin{align*}
\Prb(W=0|V=1)&=\Prb(Z=0|V=1)=\Prb(Z=0)=1-\frac{p}{q}=\frac{q-p}{q}\,.
\end{align*}
Thus, we can now bound the following quantity by summing on all possible self-avoiding paths for $\gamma$. For short, we use the abbreviation s.a. for self-avoiding.
\begin{align}\label{couplage1}
\Prb\Bigg(|\gamma_c|&\geq |\gamma|\left(\frac{q-p}{q}+\delta\right)\Bigg)\nonumber\\
&=\sum_{k=\|x\|_1}^\infty \sum_{\substack{|r|=k\\ \text {r s.a. path }}} \Prb\left(\gamma=r,|\gamma_c|\geq |\gamma|\left(\frac{q-p}{q}+\delta\right)\right)\nonumber\\
&=\sum_{k=\|x\|_1}^\infty \sum_{\substack{|r|=k\\ \text {r s.a. path }}} \Prb\left(\gamma=r,|\{e\in r:\,e\text{ is $p$-closed}\}|\geq k\left(\frac{q-p}{q}+\delta\right)\right)\nonumber\\
&=\sum_{k=\|x\|_1}^\infty \sum_{\substack{|r|=k\\ \text {r s.a. path }}} \Prb\left(\gamma=r, |\{e\in r: Z(e)=0\}| \geq k\left(\frac{q-p}{q}+\delta\right)\right)\nonumber\\
&=\sum_{k=\|x\|_1}^\infty \sum_{\substack{|r|=k\\ \text {r s.a. path }}} \Prb\left(\gamma=r\right)\Prb\left( |\{e\in r: Z(e)=0\}| \geq k\left(\frac{q-p}{q}+\delta\right)\right)\nonumber\\
&\leq\sum_{k=\|x\|_1}^\infty \sum_{\substack{|r|=k\\ \text {r s.a. path }}}  \Prb\left(\gamma=r\right) e^{-2\delta^2 k}\leq e^{-2\delta^2 \|x\|_1}
\end{align}
where we use Chernoff bound in the second to last inequality (see Theorem 1 in \cite{Chernoff}).
On the event $F_x\cap\left\{|\gamma_c|< |\gamma|\left(\frac{q-p}{q}+\delta\right)\right\}$, by \eqref{eq6.1.1}, we get
\begin{align*}
D^{\sC_p}(0,x)&\leq D^{\sC_q}(0,x)+\rho_d\left(N |\gamma|\left(\frac{q-p}{q}+\delta\right)+N\sum\limits_{C\in Bad:C\cap\Gamma\neq\emptyset} |C| \right)\\
&= D^{\sC_q}(0,x)\left(1+\rho_dN\left(\frac{q-p}{q}+\delta\right)\right)+\rho_dN\sum\limits_{C\in Bad:C\cap\Gamma\neq\emptyset} |C|
\end{align*}
and the conclusion follows.
\end{proof}
\noindent The proof of the following Lemma is the last step before proving Theorem \ref{heart}.
\begin{lem}\label{controle}
Let $p_0>p_c(d)$ and $\ep\in(0,1-p_0)$, we set $N(\ep)$ as in Proposition \ref{controlBad}. There exists $\pp:=\pp(\ep,p_0)>0$ such that for all $q\geq p\geq p_0$, for all $x\in\sZ^d$ with $\|x\|_1$ large enough,
\begin{align*}
\Prb\left(D^{\sC_p}(\widetilde{0}^{\sC_p},\widetilde{x}^{\sC_p})\leq  D^{\sC_q}(\widetilde{0}^{\sC_p},\widetilde{x}^{\sC_p})\left(1+\rho_d\frac{q-p}{q}N(\ep)\right)+\eta_d \ep\|x\|_1\right)\geq \pp(\ep,p_0)
\end{align*}
where $\eta_d>0$ is a constant depending only on $d$.
\end{lem}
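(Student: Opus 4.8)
The plan is to combine the three deterministic/probabilistic ingredients already assembled in the paper: the path-modification Lemma~\ref{lem1}, the deviation bound on $|\gamma_c|$ from Lemma~\ref{geo}, and the combinatorial control of $\sum_{C\in Bad:C\cap\Gamma\neq\emptyset}|C|$ from Proposition~\ref{controlBad}. Writing $\widetilde{0}=\widetilde{0}^{\sC_p}$ and $\widetilde{x}=\widetilde{x}^{\sC_p}$, note first that on $F_{\widetilde 0,\widetilde x}$ (the analogue of the event $F_x$ but for the pair $\widetilde 0,\widetilde x$, which holds with probability bounded away from $0$ uniformly in $p\geq p_0$ by the standard renormalization estimate Theorem~\ref{thm5}) we have $\widetilde 0,\widetilde x\in\sC_p\subset\sC_q$, so there is an optimal $q$-open path $\gamma$ between them, and $|\gamma|=D^{\sC_q}(\widetilde 0,\widetilde x)\geq\|\widetilde x-\widetilde 0\|_1$, which is comparable to $\|x\|_1$ for $\|x\|_1$ large (since $\|\widetilde 0\|_1,\|\widetilde x-x\|_1$ have exponential tails).

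The first step is to invoke Lemma~\ref{geo} (applied to the pair $\widetilde 0,\widetilde x$, i.e.\ the same coupling argument with two independent Bernoullis $V,Z$ so that $\gamma$ is independent of the $p$-state of its edges) with a well-chosen $\delta=\delta(\ep)$: this gives that, outside an event of probability at most $e^{-2\delta^2\|\widetilde x-\widetilde 0\|_1}$ (which is $o(1)$ as $\|x\|_1\to\infty$, hence eventually $\leq \pp/4$ say), one has
\begin{align*}
D^{\sC_p}(\widetilde 0,\widetilde x)\leq D^{\sC_q}(\widetilde 0,\widetilde x)\Big(1+\rho_dN(\ep)\big(\tfrac{q-p}{q}+\delta\big)\Big)+\rho_dN(\ep)\sum_{C\in Bad:C\cap\Gamma\neq\emptyset}|C|\,.
\end{align*}
The second step controls the bad-cluster sum: since $\gamma$ is a path starting (after translation) from a vertex near $0$, Lemma~\ref{probest} bounds $|\widetilde\Gamma|\leq 1+(|\gamma|+1)/N(\ep)$, and since $|\gamma|=D^{\sC_q}(\widetilde 0,\widetilde x)$ is itself at most a linear function of $\|x\|_1$ with overwhelming probability (again Antal--Pisztora / Lemma~\ref{AP} applied in $\sC_q$, uniform in $q\geq p_0$), we get $|\widetilde\Gamma|\leq n$ with $n$ of order $\|x\|_1/N(\ep)$ off an event of probability $o(1)$. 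Then Proposition~\ref{controlBad} with this $\ep$ and $n$ gives $\Prb(\sum_{C}|C|\geq \ep n)\leq C_\ep^{\,n}$, which is again $o(1)$; hence outside this event $\rho_dN(\ep)\sum_C|C|\leq \rho_dN(\ep)\,\ep n\leq \eta_d'\ep\|x\|_1$ for a dimensional constant $\eta_d'$ (the factor $N(\ep)$ cancels against the $1/N(\ep)$ in $n$, which is exactly why the $N(\ep)=C_1|\log\ep|$ blowup does not hurt).

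The third step is bookkeeping: choose $\delta=\delta(\ep)$ small enough that $\rho_dN(\ep)\delta\cdot D^{\sC_q}(\widetilde 0,\widetilde x)\leq \tfrac12\eta_d\ep\|x\|_1$ on the linear-growth event for $D^{\sC_q}$ — this is possible because $D^{\sC_q}(\widetilde 0,\widetilde x)\leq c_d\|x\|_1$ there, so it suffices to take $\delta\leq \eta_d\ep/(2c_d\rho_dN(\ep))$, a fixed positive number depending only on $\ep,p_0,d$; note this same $\delta$ keeps $e^{-2\delta^2\|\widetilde x-\widetilde0\|_1}\to 0$. Combining: on the intersection of $F_{\widetilde 0,\widetilde x}$ with the two complementary events above (whose total probability is at least $\Prb(F_{\widetilde 0,\widetilde x})-o(1)$, which is $\geq \pp(\ep,p_0)>0$ for $\|x\|_1$ large, by choosing $\pp$ to be, say, half the uniform lower bound on $\Prb(F_{\widetilde 0,\widetilde x})$), we obtain
\begin{align*}
D^{\sC_p}(\widetilde 0,\widetilde x)\leq D^{\sC_q}(\widetilde 0,\widetilde x)\Big(1+\rho_d\tfrac{q-p}{q}N(\ep)\Big)+\eta_d\ep\|x\|_1\,,
\end{align*}
absorbing the $\delta$-term and the bad-cluster term into $\eta_d\ep\|x\|_1$ with a suitable dimensional $\eta_d$. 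The main obstacle is the simultaneous control of $n$: we need the linear upper bound on $D^{\sC_q}(\widetilde 0,\widetilde x)$ to be uniform in $q\geq p_0$ (so that $n$, hence the exponent in $C_\ep^{\,n}$, is genuinely of order $\|x\|_1$), and we must ensure that the good event $F_{\widetilde 0,\widetilde x}$ — which is where the regularized endpoints lie in an \emph{infinite} cluster of good boxes — has probability bounded below uniformly; both follow from the uniform renormalization estimates of Section~\ref{goodbox}, but threading them through carefully, together with the fact that $\widetilde 0,\widetilde x$ (defined via $\sC_p$) are at bounded-tail distance from $0,x$, is the delicate part of the argument.
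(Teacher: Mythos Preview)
Your overall strategy matches the paper's: combine Lemma~\ref{geo}, Proposition~\ref{controlBad}, and the Antal--Pisztora bound (Lemma~\ref{AP}) on an event of positive probability where the endpoints lie in $\sC_p$ and their $N$-boxes lie in an infinite cluster of $p$-good boxes. Your choice of $\delta$ is essentially the paper's $\delta=\ep/N(\ep)$ up to a dimensional constant.

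There is, however, one genuine gap. You work throughout with the \emph{random} endpoints $\widetilde 0^{\sC_p},\widetilde x^{\sC_p}$ and invoke Lemma~\ref{geo} ``applied to the pair $\widetilde 0,\widetilde x$''. But the coupling argument behind Lemma~\ref{geo} hinges on the $q$-geodesic $\gamma$ being measurable with respect to the $V$-variables alone, so that $\{\gamma=r\}$ is independent of the $Z$-variables and the factorization in \eqref{couplage1} goes through. If the endpoints $\widetilde 0^{\sC_p},\widetilde x^{\sC_p}$ depend on $\sC_p$ (hence on $W=Z\cdot V$, hence on $Z$), then $\gamma$ inherits $Z$-dependence through its endpoints and the independence breaks down. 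You flag this as ``the delicate part'' without resolving it; a union bound over all nearby candidate endpoints could be made to work but is heavy, and you would also need quantitative tail control on $\|\widetilde 0\|_1$ and $\|\widetilde x-x\|_1$ to match exponents. The lower bound you assert for $\Prb(F_{\widetilde 0,\widetilde x})$ has the same issue: the boxes involved are random.

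The paper sidesteps all of this with a one-line observation you are missing: on the event
\[
F_x=\{0\in\sC_p,\ x\in\sC_p\}\cap\{B_{N(\ep)}(0)\in\underline{\sC}_p,\ B_{N(\ep)}(x)\in\underline{\sC}_p\}
\]
(defined with the \emph{deterministic} points $0,x$), one has $\widetilde 0^{\sC_p}=0$ and $\widetilde x^{\sC_p}=x$, so the regularized chemical distances coincide with $D^{\sC_p}(0,x)$ and $D^{\sC_q}(0,x)$. All the analysis is then carried out with deterministic endpoints, Lemma~\ref{geo} applies verbatim, and one only needs $\Prb(F_x^c)\leq 1-\theta_{p_0}^2/2$, which follows directly from FKG and the stochastic domination of the box field by an i.i.d.\ site percolation (Theorem~\ref{thm5} plus \cite{Liggett}). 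This is both simpler and correct; once you replace your $F_{\widetilde 0,\widetilde x}$ by $F_x$, the rest of your outline is exactly the paper's proof.
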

\begin{proof}
Let us fix $\ep>0$ and $N(\ep)$ as in Proposition 5.1.  Fix an $x\in\sZ^d$ such that $\|x\|_1\geq 3dN(\ep)$. We denote by $B_{N(\ep)}(0)$ (respectively $B_{N(\ep)}(x)$) the $N(\ep)$-box containing $0$ (rep. $x$) and by $\underline{\sC}_p$ the union of infinite cluster of $p$-good boxes. We recall that 
$$F_x=\big\{\,0\in\sC_p,\,x\in\sC_p\,\big\}\cap\big\{\,B_{N(\ep)}(0)\in\underline{\sC}_p,\,B_{N(\ep)}(x)\in\underline{\sC}_p\,\big\}\,.$$
We have
\begin{align}\label{iq1}
\Prb&\left(D^{\sC_p}(\widetilde{0}^{\sC_p},\widetilde{x}^{\sC_p})\geq  D^{\sC_q}(\widetilde{0}^{\sC_p},\widetilde{x}^{\sC_p})\left(1+\rho_d\frac{q-p}{q}N(\ep)\right)+3\ep\beta\rho_d\|x\|_1\right)\nonumber\\
&\leq \Prb\left(F_x,\, D^{\sC_p}(0,x)\geq  D^{\sC_q}(0,x)\left(1+\rho_d\frac{q-p}{q}N(\ep)\right)+3\ep\beta\rho_d\|x\|_1\right)+\Prb(F_x^c)\,.
\end{align}
We have $$\Prb(F_x^c)\leq \Prb (\{0\in\sC_p,\,x\in\sC_p\}^c)+\Prb(\{B_{N(\ep)}(0)\in \underline{\sC}_p,\,B_{N(\ep)}(x)\in \underline{\sC}_p\}^c)\,.$$
Using FKG inequality, we have
$$\Prb(0\in\sC_p,\,x\in\sC_p)\geq \Prb(0\in\sC_p)\Prb(x\in\sC_p)\geq \theta_{p_0}^2\,.$$ 
Let us define $Y_\textbf{i}=\mathds{1}_{\{B_{N(\ep)}(\textbf{i}) \text{ is $p$-good}\}}$. First note that the field $(Y_\textbf{i})_{\textbf{i}\in\sZ^d}$ has a finite range of dependence that depends on $\beta$ and $d$. Using the stochastic comparison in \cite{Liggett}, for every $\pp_1$, there exists a positive constant $\alpha$ depending on $\beta$, $d$ and $\pp_1$ such that if $\Prb(Y_\textbf{0}=0)\leq \alpha$ then the field $(Y_{\textbf{i}}) _{\textbf{i}\in\sZ^d}$ stochastically dominates a family of independent Bernoulli random variables with parameter $\pp_1$.
Let us choose $\pp_1$ large enough such that $$1-\theta_{site,\pp_1}^2\leq \frac{\theta_{p_0} ^2}{2}\,$$
where $\theta_{site,\pp_1}$ denotes the probability for a site to belong to the infinite cluster of i.i.d. Bernoulli site percolation of parameter $\pp_1$. Thanks to Theorem \ref{thm5}, there exists a positive integer $N_0$ depending only on $\alpha$, $p_0$ and $d$ such that for every $N\geq N_0$, 
$$\Prb(Y_\textbf{0}=0)\leq \alpha\,.$$ 
For every $\ep\leq 1-p_0$, we have $|\log \ep|\geq |\log (1-p_0)|$. Up to taking a larger constant $C_1$ in the expression of $N(\ep)$ stated in Proposition \ref{controlBad}, \textit{i.e.}, $N(\ep)=C_1|\log\ep|$, we can assume without loss of generality that $N(\ep)\geq N_0$ so that  using the stochastic domination and FKG we obtain
$$\Prb(B_{N(\ep)}(0)\in \underline{\sC}_p,\,B_{N(\ep)}(x)\in \underline{\sC}_p)\geq \theta_{site,\pp_1}^2\,.$$
Finally, we get
\begin{align}\label{iq1'}
\Prb(F_x^c)&\leq 1-\theta_{p_0} ^2+ 1-\theta_{site,\pp_1}^2\leq 1- \frac{\theta_{p_0} ^2}{2}\,.
\end{align}

On the event $F_x$, we have $0,x\in\sC_p\subset\sC_q$, we can consider $\gamma$ a geodesic from $0$ to $x$ in $\sC_q$, and let $\Gamma$ be the set of $N$-boxes that $\gamma$ visits. 

\noindent By Lemma \ref{geo}, we have for every $\delta>0$
\begin{align}\label{iq2}
\Prb&\Big(F_x,\, D^{\sC_p}(0,x)\geq  D^{\sC_q}(0,x)\left(1+\rho_d\frac{q-p}{q}N(\ep)\right)+3\ep\beta\rho_d\|x\|_1\Big)\nonumber\\
\leq &\Prb\left(F_x, \,\rho_dN(\ep)\left(D^{\sC_q}(0,x)\delta +\sum\limits_{C\in Bad:C\cap\Gamma\neq\emptyset} |C|\right)\geq 3\ep\beta\|x\|_1\right)\nonumber\\
&+\Prb\left(\begin{array}{c}F_x,\, D^{\sC_p}(0,x)> D^{\sC_q}(0,x)\left(1+\rho_d N(\ep)\left(\frac{q-p}{q}+\delta\right)\right)\\\hfill+\rho_dN(\ep) \sum\limits_{\substack{C\in Bad:\\C\cap\Gamma\neq\emptyset}} |C|\end{array}\right)\nonumber\\
\leq &\Prb\left(F_x,\,|\gamma|\leq\beta\|x\|_1,\,\sum\limits_{C\in Bad:C\cap\Gamma\neq\emptyset} |C|\geq \frac{3\ep\beta\|x\|_1}{N(\ep)}-\delta|\gamma|\right)\nonumber\\
&+ \Prb\left(F_x,\,|\gamma|>\beta\|x\|_1\right)+e^{-2\delta^2\|x\|_1}\nonumber\\
\leq &\Prb\left(F_x,\,|\gamma|\leq\beta\|x\|_1,\,\sum\limits_{C\in Bad:C\cap\Gamma\neq\emptyset} |C|\geq \beta\|x\|_1 \left(\frac{3\ep}{N(\ep)}-\delta\right)\right)\nonumber\\
&+\Prb\left(F_x,\,|\gamma|>\beta\|x\|_1\right)+e^{-2\delta^2\|x\|_1}\, .
\end{align}
We set $\delta=\ep/N(\ep)$. We know by Lemma \ref{probest} that $|\widetilde{\Gamma}|\leq 1+(|\gamma|+1)/N(\ep)$. Moreover as $|\gamma|\geq 3dN(\ep)$, we have $|\widetilde{\Gamma}|\leq 2|\gamma|/N(\ep)$. Using Proposition \ref{controlBad},
\begin{align}\label{iq3}
\Prb&\left(F_x,\,|\gamma|\leq\beta\|x\|_1,\,\sum\limits_{C\in Bad:C\cap\Gamma\neq\emptyset} |C|\geq\beta\|x\|_1 \left(\frac{3\ep}{N(\ep)}-\delta\right)\right) \nonumber\\
&\leq\Prb\left(\begin{array}{c}\exists \gamma \text{ starting from $0$ such that}\, |\widetilde{\Gamma}|\leq\frac{ 2\beta\|x\|_1}{N(\ep)},\\\sum\limits_{C\in Bad:C\cap\Gamma\neq\emptyset} |C|\geq\ep\frac{ 2\beta\|x\|_1}{N(\ep)}\end{array}\right)\leq C_\ep^{2\beta\|x\|_1/N(\ep)}
\end{align}
where $C_\ep<1$.
Moreover, by Lemma \ref{AP}, we get 
\begin{align}\label{iq4}
\Prb\left(F_x,\,|\gamma|>\beta\|x\|_1\right)&\leq  \Prb(\beta\|x\|_1\leq D^{\sC_q}(0,x)<+\infty)\leq \widehat{A}\exp(-\widehat{B}\|x\|_1)\, .
\end{align}
Finally, combining \eqref{iq1}, \eqref{iq1'}, \eqref{iq2}, \eqref{iq3} and  \eqref{iq4}, we obtain that
\begin{align*}
\Prb&\left(D^{\sC_p}(\widetilde{0}^{\sC_p},\widetilde{x}^{\sC_p})\geq  D^{\sC_q}(\widetilde{0}^{\sC_p},\widetilde{x}^{\sC_p})\left(1+\rho_d\frac{q-p}{q}N(\ep)\right)+3\ep\beta\rho_d\|x\|_1\right)\\
&\leq 1- \frac{\theta_{p_0}^2}{2}+ C_\ep^{2\beta\|x\|_1/N(\ep)}+\widehat{A}e^{-\widehat{B}\|x\|_1}+e^{-2\ep^2\|x\|_1/N(\ep)^2} \\
&\leq 1-\pp(\ep,p_0)
\end{align*}
for an appropriate choice of $\pp(\ep,p_0)>0$ and for every $x$ such that $\|x\|_1$ is large enough.
\end{proof}

\begin{proof}[Proof of Theorem \ref{heart}]
Let $\ep>0$, $\delta>0$, $p_0>p_c(d)$ and $x\in\sZ^d$, consider $N(\ep)=C_1|\log\ep|$ as in Proposition \ref{controlBad}, $\pp=\pp(\ep,p_0)$ as in Lemma \ref{controle} and $q\geq p\geq p_0$. With the convergence of the regularized times given by Proposition \ref{convergence}, we can choose $n$ large enough such that 
$$\Prb\left(\mu_p(x)-\delta \leq \frac{D^{\sC_p}(\widetilde{0}^{\sC_p},\widetilde{nx}^{\sC_p})}{n}\right)\geq 1-\frac{\pp}{3}$$
$$\Prb\left(  \frac{D^{\sC_q}(\widetilde{0}^{\sC_p},\widetilde{nx}^{\sC_p})}{n}\leq\mu_q(x)+\delta\right)\geq 1-\frac{\pp}{3}$$
$$\Prb\left(D^{\sC_p}(\widetilde{0}^{\sC_p},\widetilde{nx}^{\sC_p})\leq  D^{\sC_q}(\widetilde{0}^{\sC_p},\widetilde{nx}^{\sC_p})\left(1+\rho_d\frac{q-p}{q}N(\ep)\right)+\eta_d\ep n\|x\|_1\right)\geq \pp\,.$$
The intersection of these three events has positive probability, we obtain on this intersection
\begin{align*}
\mu_p(x)-\delta\leq (\mu_q(x)+\delta)\left(1+\rho_d\frac{q-p}{q}N(\ep)\right)+\eta_d\ep \|x\|_1\,.
\end{align*}
By taking the limit when $\delta$ goes to $0$ we get
\begin{align*}
\mu_p(x)\leq \mu_q(x)\left(1+\rho_d\frac{q-p}{q}N(\ep)\right)+\eta_d\ep \|x\|_1\,. 
\end{align*}
By Corollary \ref{decmu}, we know that the map $p\rightarrow\mu_p$ is non-increasing. We also know that $\mu_p(x)\leq \|x\|_1 \mu_p(e_1)$ for $e_1=(1,0,\dots,0)$, for any $p>p_c(d)$ and any $x\in\sZ^d$. Thus, for every $\ep>0$,
\begin{align*}
\mu_p(x)- \mu_q(x)&\leq \mu_q(x)\rho_d\frac{q-p}{q}N(\ep)+\eta_d\ep \|x\|_1\\
&\leq \mu_{p_0}(e_1)\|x\|_1\rho_d\frac{q-p}{p_c(d)}N(\ep)+\eta_d\ep \|x\|_1\\
&\leq \eta'_d (p_0)\|x\|_1(N(\ep)(q-p)+\ep)
\end{align*}
where $\eta'_d(p_0)$ is a constant depending on $d$ and $p_0$.
Using the expression of $N(\ep)$ stated in Proposition \ref{controlBad}, we obtain
\begin{align}\label{finaleq}
\mu_p(x)- \mu_q(x)&\leq  \eta'_d \|x\|_1\left(C_1|\log\ep|(q-p)+\ep\right)\, .
\end{align}
By setting $\ep=q-p$ in the inequality, we get
\begin{align*}
\mu_p(x)- \mu_q(x)&\leq  \eta''_d \|x\|_1(q-p)|\log(q-p)|
\end{align*}
where $\eta''_d>0$ depends only on $p_0$ and $d$. 
Thanks to Corollary \ref{decmu}, we have $\mu_p(x)- \mu_q(x)\geq 0$, so that
\begin{align}\label{eq6.1}
|\mu_p(x)- \mu_q(x)|&\leq  \eta''_d \|x\|_1(q-p)|\log(q-p)|\, .
\end{align}
By homogeneity, \eqref{eq6.1} also holds for all $x\in\mathbb{Q}^d$. Let us recall that for all $x,y\in\mathbb{R}^d$ and $p\geq p_c(d)$, 
\begin{align}\label{eqcerf}
|\mu_p(x)- \mu_p(y)|\leq \mu_p(e_1)\|x-y\|_1\,,
\end{align}
see for instance Theorem 1 in \cite{cerf2016}.
Moreover, there exists a finite set $(y_1,\dots,y_m)$ of rational points of $\mathbb{S}^{d-1}$ such that 
\begin{align*}
\mathbb{S}^{d-1}\subset \bigcup_{i=1}^m\Big\{\,x\in\mathbb{S}^{d-1}:\|y_i-x\|_1\leq (q-p)|\log(q-p)|\,\Big\}\, .
\end{align*}
Let $x\in\mathbb{S}^{d-1}$ and $y_i$ such that $\|y_i-x\|_1\leq (q-p)|\log(q-p)|$. Using inequality \eqref{eqcerf}, we get
\begin{align*}
|\mu_p(x)&- \mu_q(x)|\\
&\leq |\mu_p(x)- \mu_p(y_i)|+|\mu_p(y_i)- \mu_q(y_i)|+|\mu_q(y_i)- \mu_q(x)|\\
&\leq \mu_p(e_1)\|y_i-x\|_1+    \eta''_d \|y_i\|_1(q-p)|\log(q-p)|  + \mu_q(e_1)\|y_i-x\|_1\\
&\leq \left(2\mu_{p_0}(e_1)+\eta''_d \right)(q-p)|\log(q-p)|\,.
\end{align*}
This yields the result.
\end{proof}
\begin{proof}[Proof of Corollary \ref{cor1}]
Let $p_0>p_c(d)$. We consider the constant $\kappa_d$ appearing in the Theorem \ref{heart}. Let $p\leq q$ in $[p_0,1]$.
We recall the following definition of the Hausdorff distance between two subsets $E$ and $F$ of $\sR^d$:
$$d_\cH(E,F)=\inf \Big\{\,r\in\sR^+:E\subset F^r\text{ and } F\subset E^r\,\Big\}$$
where $E^r=\{y:\exists x\in E, \|y-x\|_2\leq r\}$. Thus, we have
$$d_\cH(\cB_{\mu_p},\cB_{\mu_q})\leq \sup_{y\in\sS^{d-1}}\left\|\frac{y}{\mu_p(y)}-\frac{y}{\mu_q(y)}\right\|_2\,.$$
Note that $y/\mu_p(y)$ (resp. $y/\mu_q(y)$) is in the unit sphere for the norm $\mu_p$ (resp. $\mu_q$).
Let us define $\mu_p^{min}=\inf_{x\in\sS^{d-1}}\mu_p(x)$. As the map $p\rightarrow \mu_p$ is uniformly continuous on the sphere $\sS^{d-1}$ (see Theorem 1.2 in \cite{GaretMarchandProcacciaTheret},) the map $p\rightarrow \mu_p^{min}$ is also continuous and $\mu^{min}=\inf_{p\in[p_0,1]} \mu_p^{min}>0$. Finally
\begin{align*}
 d_\cH(\cB_{\mu_p},\cB_{\mu_q})&\leq \sup_{y\in\sS^{d-1}}\left|\frac{1}{\mu_p(y)}-\frac{1}{\mu_q(y)}\right|\nonumber\\
 &\leq \sup_{y\in\sS^{d-1}}\frac{1}{\mu_q(y)\mu_p(y)}\left|\mu_p(y)-\mu_q(y)\right|\nonumber\\
 &\leq \sup_{y\in\sS^{d-1}}\frac{1}{(\mu^{min})^2}\left|\mu_p(y)-\mu_q(y)\right|\nonumber\\
 &\leq \frac{\kappa_d}{(\mu^{min})^2}(q-p)|\log(q-p)| \,.
 \end{align*}
This yields the result.
\end{proof}

\begin{rk} At this stage, we were not able to obtain Lipschitz continuity for $p\rightarrow \mu_p$. The difficulty comes from the fact that we do not know the correlation between $\gamma$ and the state of the boxes that $\gamma$ visits. At first sight, it may seem that the renormalization is responsible for the appearance of the log terms in Theorem \ref{heart}. However, when $p$ is very close to $1$, we can avoid renormalization and bypass $p$-closed edges at a microscopic scale as in \cite{CoxDurrett} but even in that case, we cannot obtain Lipschitz continuous regularity with the kind of combinatorial computations made in section \ref{ProbaEst}. A similar issue arises, it is hard to deal with the correlation between $p$-closed edges of $\gamma$ and the length of the microscopic bypasses. 
\end{rk}

\bibliographystyle{plain}

\begin{thebibliography}{}

\end{thebibliography}


\begin{thebibliography}{10}

\bibitem{AntalPisztora}
Peter Antal and Agoston Pisztora.
\newblock On the chemical distance for supercritical {B}ernoulli percolation.
\newblock {\em Ann. Probab.}, 24(2):1036--1048, 1996.

\bibitem{cerf2016}
Rapha{\"e}l Cerf and Marie Th{\'e}ret.
\newblock Weak shape theorem in first passage percolation with infinite passage
  times.
\newblock {\em Ann. Inst. H. Poincar{\'e} Probab. Statist.}, 52(3):1351--1381,
  08 2016.

\bibitem{COURONNE200481}
Olivier Couronn{\'e} and Reda~J{\"u}rg Messikh.
\newblock Surface order large deviations for 2{D} {FK}-percolation and {P}otts
  models.
\newblock {\em Stochastic Processes and their Applications}, 113(1):81 -- 99,
  2004.

\bibitem{Cox}
J.~Theodore Cox.
\newblock The time constant of first-passage percolation on the square lattice.
\newblock {\em Adv. in Appl. Probab.}, 12(4):864--879, 1980.

\bibitem{CoxDurrett}
J.~Theodore Cox and Richard Durrett.
\newblock Some limit theorems for percolation processes with necessary and
  sufficient conditions.
\newblock {\em Ann. Probab.}, 9(4):583--603, 1981.

\bibitem{CoxKesten}
J.~Theodore Cox and Harry Kesten.
\newblock On the continuity of the time constant of first-passage percolation.
\newblock {\em J. Appl. Probab.}, 18(4):809--819, 1981.

\bibitem{GaretMarchand04}
Olivier Garet and R{\'e}gine Marchand.
\newblock Asymptotic shape for the chemical distance and first-passage
  percolation on the infinite {B}ernoulli cluster.
\newblock {\em ESAIM Probab. Stat.}, 8:169--199 (electronic), 2004.

\bibitem{GaretMarchand07}
Olivier Garet and R{\'e}gine Marchand.
\newblock Large deviations for the chemical distance in supercritical
  {B}ernoulli percolation.
\newblock {\em Ann. Probab.}, 35(3):833--866, 2007.

\bibitem{GaretMarchand10}
Olivier Garet and R{\'e}gine Marchand.
\newblock Moderate deviations for the chemical distance in {B}ernoulli
  percolation.
\newblock {\em ALEA Lat. Am. J. Probab. Math. Stat.}, 7:171--191, 2010.

\bibitem{GaretMarchandProcacciaTheret}
Olivier Garet, R{\'e}gine Marchand, Eviatar~B. Procaccia, and Marie Th{\'e}ret.
\newblock Continuity of the time and isoperimetric constants in supercritical
  percolation.
\newblock {\em Electron. J. Probab.}, 22:35 pp., 2017.

\bibitem{Grimmett99}
Geoffrey Grimmett.
\newblock {\em Percolation}, volume 321 of {\em Grundlehren der Mathematischen
  Wissenschaften [Fundamental Principles of Mathematical Sciences]}.
\newblock Springer-Verlag, Berlin, second edition, 1999.

\bibitem{HammersleyWelsh}
J.~M. Hammersley and D.~J.~A. Welsh.
\newblock First-passage percolation, subadditive processes, stochastic
  networks, and generalized renewal theory.
\newblock In {\em Proc. {I}nternat. {R}es. {S}emin., {S}tatist. {L}ab., {U}niv.
  {C}alifornia, {B}erkeley, {C}alif}, pages 61--110. Springer-Verlag, New York,
  1965.

\bibitem{Chernoff}
Wassily Hoeffding.
\newblock Probability inequalities for sums of bounded random variables.
\newblock {\em Journal of the American Statistical Association},
  58(301):13--30, 1963.

\bibitem{Kesten:StFlour}
Harry Kesten.
\newblock Aspects of first passage percolation.
\newblock In {\em \'{E}cole d'\'et\'e de probabilit\'es de {S}aint-{F}lour,
  {XIV}---1984}, volume 1180 of {\em Lecture Notes in Math.}, pages 125--264.
  Springer, Berlin, 1986.

\bibitem{Liggett}
T.~M. Liggett, R.~H. Schonmann, and A.~M. Stacey.
\newblock Domination by product measures.
\newblock {\em The Annals of Probability}, 25(1):71--95, 1997.

\bibitem{Pisztora}
\'Agoston Pisztora.
\newblock Surface order large deviations for {I}sing, {P}otts and percolation
  models.
\newblock {\em Probability Theory and Related Fields}, 104(4):427--466, 1996.

\bibitem{Timar}
\'{A}d\'{a}m Tim\'{a}r.
\newblock Boundary-connectivity via graph theory.
\newblock {\em Proc. Amer. Math. Soc.}, 141(2):475--480, 2013.

\end{thebibliography}
\def\cprime{$'$}

\end{document}